\newcommand{\Om} {\Omega}
\newcommand {\ep} {\varepsilon}
\newcommand {\om} {\omega}
\newcommand {\sg} {\sigma}
\newcommand {\ii} {\infty}
\newcommand {\dt} {\delta}
\newcommand {\al} {\alpha}
\newcommand {\bt} {\beta}
\newcommand {\lb} {\lambda}
\newcommand {\Lb} {\Lambda}
\newcommand {\la} {\longrightarrow}
\newcommand {\sm} {\setminus}
\newcommand {\su} {\subset}
\newcommand {\wt} {\widetilde}
\newcommand {\wh} {\widehat}
\newcommand {\mb} {\mathbf}
\newcommand {\mbb} {\mathbb}
\newcommand {\pr} {\prime}
\newcommand {\mc} {\mathcal}
\newtheorem{teo}{Theorem}[section]
\newtheorem{pro}{Proposition}[section]
\newtheorem{cor}{Corollary}[section]
\newtheorem{lm}{Lemma}[section]
\theoremstyle{definition}
\newtheorem{rem}{Remark}[section]
\title{Besicovitch-weighted \\ergodic theorems with continuous time}
\keywords{Sigma-finite measure, Dunford-Schwartz operator, continuous semigroup, bounded Besicovitch function, almost uniform convergence, fully symmetric space}
\subjclass[2010]{47A35(primary), 46L52(secondary)}
\begin{document}
\date{January 12, 2025}

\begin{abstract}
Given $1\leq p<\ii$, we show that ergodic flows in the $L^p$-space over a $\sg$-finite measure space generated by strongly continuous semigroups of Dunford-Schwartz operators and modulated by bounded Besicovitch almost periodic functions converge almost uniformly (in Egorov's sense). The corresponding local ergodic theorem is proved with identification of the limit. Then we extend these results to arbitrary fully symmetric spaces, including Orlicz, Lorentz, and Marcinkiewicz spaces. 
\end{abstract}

\author{SEMYON LITVINOV}
\address{Pennsylvania State University, 76 University Drive, Hazleton, PA 18202, USA}
\email{snl2@psu.edu}

\maketitle
\section{Introduction}
Since the groundbraking work \cite{ry}, pointwise convergence of Besicovitch-weighted ergodic averages has been examined quite extensively; see, for example, \cite{bo, bl, lot, dj, ccl}. However, all these studies were concerned with discrete time, thus, there is a natural question what happens in the realm of continuous time. In this case, of course, one must replace Besicovitch sequences by the well-known Besicovitch almost periodic functions \cite{be}. 

Initially, this problem was studied in the non-commutative setting\,--\,for actions of strongly continuous semigroups of {\it positive} Dunford-Schwartz operators\,--\,in \cite{mk} for $t\to0^+$ and in \cite{cl20} for $t\to0^+$ and for $t\to\ii$. In both articles, non-commutative counterparts of the almost uniform convergence (in Egorov's sense) were employed. 

Back to the commutative case with infinite measure, it is worth asking whether {\it almost uniform convergence}\,--\,which is generally stronger than almost everywhere convergence\,--\,of Besicovitch-weighted ergodic averages with continuous time will hold for actions of strongly continuous semigroups of Dunford-Schwartz operators that are {\it not presumed positive}. Besides, as the measure is not finite, there is another question: what happens within the the universal $(L^1+L^\ii)$\,-\,space {\it outside the $L^1$-\,space}. These are the three questions we aim to address in this article.

Let $(\Om,\mu)$ be a $\sg$-finite measure space. Given $1\leq p\leq\ii$, denote $\mc L^p=\mc L^p(\Om,\mu)$. Let $\mc T=\{T_s\}_{s\ge 0}$ be a semigroup of Dunford-Schwartz operators in $\mc L^1+\mc L^\ii$ which is strongly continuous in $\mc L^1$. In Section 4 of the article, we show that the corresponding ergodic averages $\{M_t(\mc T)(f)\}_{t>0}$ converge almost uniformly as $t\to\ii$ and as $t\to0^+$ for every $f\in\mc L^p$, where $1\leq p<\ii$. Note that the pointwise, that is almost everywhere, convergence of these avrages was studied in \cite{te} and later in \cite{mg} and \cite{sa}.

Then we proceed in Section 5 by showing that almost uniform convergence in the space $\mc L^p$, where $1\leq p<\ii$, holds when the above ergodic averages are modulated by the bounded Besicovitch (when $t\to\ii$) or locally Besicovitch (when $t\to0^+$) almost periodic functions. In the case $t\to0^+$, the limits are identified.

In Section 6, we extend these results to arbitrary fully symmetric spaces. Such include the well-known Orlicz, Lorentz, and Marcinkiewicz spaces.

\section{Preliminaries}
Let $(\Om,\mc A,\mu) $ be a $\sg$-finite infinite measure space and let $\mc L^0 =\mc L^0(\Om,\mu)$ be the $*$-algebra of equivalence classes of almost everywhere (a.e.) finite complex-valued measurable functions on $\Om$. Given $1\leq p\leq\ii$, let $\mc L^p\su\mc L^0$ be the $\mc L^p$-space on $\Om$ equipped with the standard Banach norm $\|\cdot\|_p$.

A net $\{f_\al\}\su\mc L^0$ is said to converge {\it almost uniformly (a.u.)} to $f\in\mc L^0$ (in Egorov's sense) if for every $\ep>0$ there exists a set $G\su\Om$ such that $\mu(\Om\sm G)\leq\ep$ and $\|(f-f_\al)\chi_G\|_\ii\to 0$, where $\chi_G$ is the  characteristic function of  set $G$. It is clear that every a.u. convergent net converges almost everywhere (a.e.) and that the converse is not true in general.

Note that a.u. convergence may not agree with any topology in $\mc L^0$ -- see, for example, \cite{or}. A proof of the following statement can be found in \cite[Theorem 3.2]{li1}. Although it is given there in the non-commutative setting, one only needs to adjust notation and then verbatim repeat the argument.

\begin{pro}\label{p0}
The space $\mc L^0$ is complete with respect to a.u. convergence
\end{pro}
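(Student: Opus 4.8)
The plan is to show that any net $\{f_\al\}\su\mc L^0$ that is Cauchy with respect to a.u. convergence has an a.u. limit in $\mc L^0$. First I would make precise what "Cauchy" means here: for every $\ep>0$ there is a set $G\su\Om$ with $\mu(\Om\sm G)\leq\ep$ and an index $\al_0$ such that $\|(f_\al-f_\bt)\chi_G\|_\ii\to 0$ as $\al,\bt$ advance past $\al_0$. The key device is a standard exhaustion/diagonal argument: for each $k\in\mbb N$ choose, using the Cauchy property with $\ep=2^{-k}$, a set $G_k\su\Om$ with $\mu(\Om\sm G_k)\leq 2^{-k}$ on which the net is uniformly Cauchy in $\|\cdot\|_\ii$; since $\mc L^\ii(G_k)$ is a Banach space, the net converges uniformly on $G_k$ to some $g_k\in\mc L^\ii(G_k)$. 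On overlaps $G_k\cap G_l$ the uniform limits must agree a.e., so the $g_k$ glue to a well-defined measurable function $f$ on $G:=\bigcup_k G_k$; since $\mu(\Om\sm G)\leq\mu(\Om\sm G_k)\leq 2^{-k}$ for every $k$, we have $\mu(\Om\sm G)=0$, so $f\in\mc L^0(\Om,\mu)$ after setting it to $0$ off $G$ (it is a.e. finite because each $g_k$ is).

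Next I would verify that $f_\al\to f$ a.u. Fix $\ep>0$ and pick $k$ with $2^{-k}\leq\ep$; then $\mu(\Om\sm G_k)\leq\ep$ and, because the net converges uniformly to $g_k=f|_{G_k}$ on $G_k$, we get $\|(f-f_\al)\chi_{G_k}\|_\ii\to 0$. That is exactly the definition of a.u. convergence, so $f$ is the desired limit. Uniqueness of the a.u. limit (up to a.e. equality) follows from the remark in the excerpt that a.u. convergence implies a.e. convergence, together with the fact that a.e. limits are unique in $\mc L^0$; hence the statement that $\mc L^0$ is complete with respect to a.u. convergence is justified.

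The one point that needs genuine care\,--\,and which I expect to be the main obstacle\,--\,is the interaction between \emph{nets} (rather than sequences) and the countable exhaustion: one must extract, for each $k$, a single tail index $\al_k$ beyond which the uniform Cauchy estimate holds on $G_k$, arrange the $\al_k$ to be increasing (using directedness of the index set), and then show the resulting limit is independent of these choices. The cleanest route is to fix one cofinal sequence of indices, run the sequential Banach-space completeness argument on each $\mc L^\ii(G_k)$ along that sequence, obtain the candidate $f$, and only afterwards check a.u. convergence of the full net back to $f$ directly from the Cauchy hypothesis\,--\,this sidesteps any delicate diagonal extraction over an uncountable directed set. Since, as the excerpt notes, this is verbatim the argument of \cite[Theorem 3.2]{li1} with notation adjusted from the non-commutative to the commutative case, I would simply indicate these steps and refer the reader there for the routine details.
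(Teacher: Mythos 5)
Your plan is correct and is essentially the argument of \cite[Theorem 3.2]{li1} that the paper itself cites in lieu of a proof: exhaust $\Om$ by sets $G_k$ with $\mu(\Om\sm G_k)\leq 2^{-k}$ on which the net is eventually uniformly Cauchy, pass to uniform limits, glue them on $\bigcup_k G_k$, and verify a.u. convergence of the full net to the glued function. The only small repair needed is in the appeal to completeness of $\mc L^\ii(G_k)$: the $f_\al$ themselves need not be bounded on $G_k$, so one should fix an index $\al_0$ and apply Banach completeness to the net $\{(f_\al-f_{\al_0})\chi_{G_k}\}_{\al\ge\al_0}$, obtaining a limit $g_k$ that is a.e.\ finite though not necessarily in $\mc L^\ii(G_k)$.
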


Traditionally, a linear operator\, $T: \mc L^1\to  \mc L^1$ is called a {\it Dunford-Schwartz operator} (in short, $T \in DS$) if
\[
\| T(f)\|_1\leq \| f\|_1 \ \ \forall \ f\in \mc L^1 \text{ \ and \ } \| T(f)\|_{\ii}\leq \| f\|_{\ii} \ \ \forall \ f\in \mc L^\ii\cap\mc L^1;
\]
see \cite[Ch.\,VIII,\,\S\,6]{ds}, \cite{ga}, \cite[\S\S\,4.1,\,4.2]{kr}. 

\vskip5pt
\noindent
However, as it was proved in \cite[Theorem 3.3]{ccl} (see also \cite[p.\,42]{sa}), one can assume without loss of generality that $T\in DS$ contracts the entire space $\mc L^\ii$:

\begin{teo}\label{t0}
For any Dunford-Schwartz operator $T$ there exists a unique linear operator\, $\wt T: \mc L^1+\mc L^\ii\to  \mc L^1+\mc L^\ii$ such that
\[
\wt T(f)=T(f)\  \ \forall \ f\in\mc L^1, \ \ \|\wt T(f)\|_\ii\leq\|f\|_\ii\ \, \forall \ f\in\mc L^\ii,
\]
and\, $\wt T|_{\mc L^\ii}$ is\, $\sg(\mc L^\ii,\mc L^1)$-continuous.
\end{teo}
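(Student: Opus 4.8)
The plan is to build the extension first on $\mc L^\ii$ by a duality argument and then glue it to $T$ across $\mc L^1+\mc L^\ii$. Put $\mc N=\mc L^1\cap\mc L^\ii$ and let $T':\mc L^\ii\to\mc L^\ii$ be the Banach-space adjoint of $T:\mc L^1\to\mc L^1$ with respect to the duality $(\mc L^1)^*=\mc L^\ii$, so that $\int(T\phi)\psi\,d\mu=\int\phi\,(T'\psi)\,d\mu$ for all $\phi\in\mc L^1$, $\psi\in\mc L^\ii$, and $\|T'\|_{\mc L^\ii\to\mc L^\ii}\le1$. The step I expect to be the main obstacle is the lemma that $T'$ in fact maps $\mc N$ into $\mc L^1$ with $\|T'h\|_1\le\|h\|_1$ (it is of course also $\|\cdot\|_\ii$-contractive). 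To prove it, fix an increasing sequence $\Om_n\uparrow\Om$ with $\mu(\Om_n)<\ii$ --- this is where $\sg$-finiteness is used --- and for $h\in\mc N$ set $u_n=\ol{\op{sgn}(T'h)}\,\chi_{\Om_n}\in\mc N$, $\|u_n\|_\ii\le1$. Since $(T'h)u_n=|T'h|\chi_{\Om_n}$, the adjoint identity together with $T\in DS$ gives
\[
\int_{\Om_n}|T'h|\,d\mu=\left|\int h\,(Tu_n)\,d\mu\right|\le\|h\|_1\,\|Tu_n\|_\ii\le\|h\|_1\,\|u_n\|_\ii\le\|h\|_1 ,
\]
and letting $n\to\ii$ (monotone convergence) yields $T'h\in\mc L^1$ and $\|T'h\|_1\le\|h\|_1$.

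Granting the lemma, for $g\in\mc L^\ii$ the functional $h\mapsto\int g\,(T'h)\,d\mu$ is well defined on $\mc N$ (because $T'h\in\mc L^1$) and satisfies $\bigl|\int g\,(T'h)\,d\mu\bigr|\le\|g\|_\ii\|h\|_1$; since $\mc N$ is $\|\cdot\|_1$-dense in $\mc L^1$, it extends uniquely to a bounded functional on $\mc L^1$, hence is represented by a unique element $\wt T g\in(\mc L^1)^*=\mc L^\ii$ with $\|\wt T g\|_\ii\le\|g\|_\ii$, and $g\mapsto\wt T g$ is linear. For $g\in\mc N$ the adjoint identity (applied with $\psi=h\in\mc L^\ii$) gives $\int(Tg)h\,d\mu=\int g\,(T'h)\,d\mu$ for every $h\in\mc N$, so $\wt T g=Tg$ on $\mc N$. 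This consistency lets one put $\wt T(\phi+g)=T\phi+\wt T g$ for $\phi\in\mc L^1$, $g\in\mc L^\ii$: the value is independent of the decomposition, since $\phi_1+g_1=\phi_2+g_2$ forces $\phi_1-\phi_2=g_2-g_1\in\mc N$, where $T$ and $\wt T$ agree. The resulting operator $\wt T:\mc L^1+\mc L^\ii\to\mc L^1+\mc L^\ii$ is linear, restricts to $T$ on $\mc L^1$, and has $\|\wt T g\|_\ii\le\|g\|_\ii$ for $g\in\mc L^\ii$.

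It remains to obtain $\sg(\mc L^\ii,\mc L^1)$-continuity of $\wt T|_{\mc L^\ii}$ and uniqueness. By the lemma, $T'|_{\mc N}$ is a $\|\cdot\|_1$-contraction from $\mc N$ into $\mc L^1$, so it extends uniquely to a bounded operator $S:\mc L^1\to\mc L^1$, $\|S\|\le1$; for $g\in\mc L^\ii$ and $h\in\mc N$ we then have $\int(\wt T g)h\,d\mu=\int g\,(T'h)\,d\mu=\int g\,(Sh)\,d\mu$, and by $\|\cdot\|_1$-density of $\mc N$ in $\mc L^1$ this identifies $\wt T|_{\mc L^\ii}$ with the Banach adjoint $S':\mc L^\ii\to\mc L^\ii$ of $S$, which is $\sg(\mc L^\ii,\mc L^1)$-continuous. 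For uniqueness, if $\wt T_1,\wt T_2$ both satisfy the conclusion they coincide with $T$ on $\mc L^1$, hence on $\mc N$; since $g\chi_{\Om_n}\to g$ in $\sg(\mc L^\ii,\mc L^1)$ for every $g\in\mc L^\ii$ (and $g\chi_{\Om_n}\in\mc N$) and both restrictions to $\mc L^\ii$ are $\sg(\mc L^\ii,\mc L^1)$-continuous, $\wt T_1 g=\wt T_2 g$, whence $\wt T_1=\wt T_2$ on $\mc L^1+\mc L^\ii$.
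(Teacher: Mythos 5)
The paper itself offers no proof of this theorem: it is imported verbatim from \cite[Theorem 3.3]{ccl} (see also \cite[p.~42]{sa}), so there is no in-text argument to compare yours against. Your double-adjoint construction is correct and complete. The one genuinely delicate point --- that the Banach adjoint $T'$ of $T$ maps $\mc L^1\cap\mc L^\ii$ into $\mc L^1$ with $\|T'h\|_1\le\|h\|_1$ --- is handled properly: the test functions $u_n=\ol{\op{sgn}(T'h)}\,\chi_{\Om_n}$ do lie in $\mc L^1\cap\mc L^\ii$, which is exactly what is needed to invoke the $\mc L^\ii$-contractivity of $T$ on $\mc L^1\cap\mc L^\ii$ in the estimate $\|Tu_n\|_\ii\le\|u_n\|_\ii$, and monotone convergence then closes the lemma. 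The remaining steps --- representing $h\mapsto\int g\,(T'h)\,d\mu$ by an element of $(\mc L^1)^*=\mc L^\ii$ (legitimate because $\mu$ is $\sg$-finite), checking consistency with $T$ on $\mc L^1\cap\mc L^\ii$ so that the definition on $\mc L^1+\mc L^\ii$ does not depend on the decomposition, identifying $\wt T|_{\mc L^\ii}$ with the adjoint $S'$ of the $\mc L^1$-extension $S$ of $T'|_{\mc L^1\cap\mc L^\ii}$ (which delivers the $\sg(\mc L^\ii,\mc L^1)$-continuity for free), and deducing uniqueness from the $\sg(\mc L^\ii,\mc L^1)$-density of $\mc L^1\cap\mc L^\ii$ in $\mc L^\ii$ --- are all sound. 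This is essentially the standard route to the cited result, and nothing needs to be changed.
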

\noindent
This observation is of the essence for us as we aim to extend the results initially developed for $\mc L^1$ to arbitrary fully symmetric spaces $E\su\mc L^1+\mc L^\ii$. 
\vskip5pt
If a Dunford-Schwartz operator $T$ is positive, that is, $T(f)\ge 0$ whenever $f\ge 0$, we will write $T\in DS^+$.

\vskip5pt
Let $X$ be a Banach space, and let $M_\al: X\to\mc L^0$, $\al\in\Lambda$, be a family of linear maps. The function on $X$ given by
\[
M^*(f)=\sup_{\al\in\Lambda}|M_\al(f)|, \ \, f\in X,
\]
is called the {\it maximal function} of the family $\{M_\al\}_{\al\in\Lambda}$. If $M^*(f)\in\mc L^0$ for all $f\in X$, then the map $M^*:X\to\mc L^0$ is colled the {\it maximal operator} of $\{M_\al\}_{\al\in\Lambda}$.

Given $f\in \mc L^1+\mc L^\ii$ and $T\in DS$, denote
\[
M_n(f)=\frac1n\sum_{k=0}^{n-1}T^k(f), \ n=1,2,\dots\text{ \ \ and\ \ }M(T)^*(f)=\sup_n|M_n(f)|.
\]

For a proof of the following {\it maximal ergodic inequality} we refer the reader to \cite[Theorem 3.3]{cl19}.

\begin{teo}\label{t1}
If \,$T\in DS$ and $1\leq p<\ii$, then 
\[
\mu\big\{M(T)^*(|f|)>\lb\big\}\leq\left(2\frac{\|f\|_p}\lb\right)^p \ \ \, \forall \ f\in\mc L^p,\ \lb>0.
\]
\end{teo}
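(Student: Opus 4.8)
The plan is to reduce the statement to the classical positive case and then pass from $p=1$ to $1<p<\ii$ by truncating the function at height $\lb/2$. Since only $|f|$ enters the inequality, put $g=|f|\ge0$, so $g\in\mc L^p$ and $\|g\|_p=\|f\|_p$. Let $S=|T|$ be the linear modulus of $T$: by the Chacon--Krengel theorem $S$ exists, is a positive contraction of $\mc L^1$, and satisfies $|T^k(h)|\le S^k(|h|)$ for all $h\in\mc L^1$ and all $k\ge0$. I would also verify that $S$ is an $\mc L^\ii$-contraction, hence $S\in DS^+$: for $h\ge0$ in $\mc L^1\cap\mc L^\ii$ one has $S(h)=\sup\{|T(g)|:g\in\mc L^1,\ |g|\le h\}$, a supremum that can be computed along a countable subfamily of such $g$; each such $g$ obeys $\|g\|_\ii\le\|h\|_\ii$, so $|T(g)|\le\|h\|_\ii$ a.e., whence $S(h)\le\|h\|_\ii$ a.e., and the general case follows by positivity. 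By Theorem \ref{t0}, $S$ (like $T$) then acts on $\mc L^1+\mc L^\ii$, and iterating $|T(h)|\le S(|h|)$ gives $|T^k(g)|\le S^k(g)$, hence $M(T)^*(g)\le M(S)^*(g)$ a.e. Thus it suffices to prove the bound with $T$ replaced by the positive operator $S$ and $f$ by $g$, where $M(S)^*$ and $M_n(S)$ denote the maximal function and averages built from $S$.

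Next I would record two elementary facts about $S\in DS^+$ and $g\ge0$. First, if $g\in\mc L^\ii$ then $M(S)^*(g)\le\|g\|_\ii$ a.e., since $0\le S^k(g)\le\|g\|_\ii$ a.e.\ for every $k$, so each $M_n(S)(g)$ and therefore their countable supremum are dominated by $\|g\|_\ii$. Second, if $g\in\mc L^1$ then Hopf's maximal ergodic theorem, valid over any $\sg$-finite measure space, gives
\[
\lb\,\mu\big\{M(S)^*(g)>\lb\big\}\le\int_{\{M(S)^*(g)>\lb\}}g\,d\mu\le\|g\|_1,
\]
that is, the weak type $(1,1)$ estimate with constant $1$. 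The second fact already settles the case $p=1$.

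For $1<p<\ii$, given $\lb>0$ I would split $g=g_1+g_2$ with $g_1=g\,\chi_{\{g>\lb/2\}}$ and $g_2=g-g_1$. Then $g_2\in\mc L^\ii$ with $\|g_2\|_\ii\le\lb/2$, while on $\{g>\lb/2\}$ one has $1\le(2g/\lb)^{p-1}$, so $g\le(2/\lb)^{p-1}g^p$ there and hence $\|g_1\|_1\le(2/\lb)^{p-1}\|g\|_p^p<\ii$; in particular $g_1\in\mc L^1$. Since $M_n(S)(g)=M_n(S)(g_1)+M_n(S)(g_2)$ and $S\ge0$, the first fact gives $M(S)^*(g)\le M(S)^*(g_1)+M(S)^*(g_2)\le M(S)^*(g_1)+\lb/2$ a.e., so $\{M(S)^*(g)>\lb\}\su\{M(S)^*(g_1)>\lb/2\}$ up to a null set. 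Applying the second fact to $g_1$ at level $\lb/2$,
\[
\mu\big\{M(S)^*(g)>\lb\big\}\le\frac{2}{\lb}\,\|g_1\|_1\le\frac{2}{\lb}\Big(\frac{2}{\lb}\Big)^{p-1}\|g\|_p^p=\Big(\frac{2\,\|g\|_p}{\lb}\Big)^p,
\]
which, combined with $M(T)^*(|f|)\le M(S)^*(|f|)$, is the asserted inequality (the same computation also reproduces $p=1$).

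I expect the one genuinely non-routine point to be the verification that the linear modulus of a Dunford--Schwartz operator is again a \emph{Dunford--Schwartz} operator: positivity and $\mc L^1$-contractivity are Chacon--Krengel, but the $\mc L^\ii$-contractivity needs the countable-supremum description of $S$ together with the $\mc L^\ii$-bound on $T$. Everything afterwards — Hopf's inequality and the truncation — is routine bookkeeping; the only subtlety is to split at height $\lb/2$ rather than $\lb$, so that the constant comes out exactly as the claimed $2$. Alternatively, one may simply quote the classical Dunford--Schwartz weak-$(1,1)$ maximal ergodic inequality \cite{ds} as a black box and omit the linear-modulus step, in which case the sole work is the truncation.
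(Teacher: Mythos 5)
Your proof is correct. Note that the paper does not actually prove this statement: it quotes it from \cite[Theorem 3.3]{cl19}, so there is no internal proof to compare against. Your argument is the standard one that underlies that reference (and \cite{ds}): pass to the linear modulus $|T|\in DS^+$ (the paper itself uses exactly this fact, with the same citation to \cite[\S\,4.1]{kr}, in the proof of its Theorem \ref{t2}), invoke the Hopf weak type $(1,1)$ maximal ergodic inequality with constant $1$, and truncate at height $\lb/2$ so that the $L^\ii$-part of the splitting contributes at most $\lb/2$ to the maximal function; the bookkeeping $\|g\chi_{\{g>\lb/2\}}\|_1\leq(2/\lb)^{p-1}\|g\|_p^p$ then yields precisely the constant $2$ in the statement. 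The one point you rightly single out as non-routine, that $|T|$ contracts $\mc L^\ii$ and hence lies in $DS^+$, is handled correctly via the countable essential-supremum description of the modulus.
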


In what follows, $\mc T=\{T_s\}_{s\ge 0}\su DS$ is a semigroup, {\it strongly continuous in} $\mc L^1$, that is,
\[
\| T_s(f)-T_{s_0}(f)\|_1\to 0 \text{ \ \ whenever\ \ } s\to s_0 \text{\ \ for all\ \ } f\in \mc L^1.
\]

It is well-known that if a linear operator contracts both $\mc L^1$ and $\mc L^\ii$, it also contracts each space\, $\mc L^p$ for $1<p<\ii$. In addition, utilizing the idea of the proof of Proposition 2.1 in \cite{cl19a}, we establish the following fact:

\begin{pro}\label{p8}
If a semigroup $\{T_s\}_{s\ge0}\su DS$ is strongly continuous in $\mc L^1$, then it is strongly continuous in each\, $\mc L^p$ for $1<p<\ii$.
\end{pro}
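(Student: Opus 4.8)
The plan is to derive the $\mc L^p$-continuity from the given $\mc L^1$-continuity by an interpolation estimate on the dense subspace $\mc L^1\cap\mc L^\ii$, and then to propagate it to all of $\mc L^p$ using that the operators $T_s$ are uniform contractions.

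First I would fix $1<p<\ii$ and record that, as noted just above the statement, each $T_s$ contracts $\mc L^p$; in particular $\{T_s\}_{s\ge0}$ is uniformly bounded on $\mc L^p$. Then I would fix $s_0\ge0$ and handle $f\in\mc L^1\cap\mc L^\ii$ first: for every $s\ge0$ the difference $g_s:=T_s(f)-T_{s_0}(f)$ belongs to $\mc L^1\cap\mc L^\ii$, with $\|g_s\|_\ii\le 2\|f\|_\ii$ because $T_s,T_{s_0}\in DS$, while $\|g_s\|_1\to0$ as $s\to s_0$ by hypothesis. The elementary bound
\[
\|g_s\|_p^p=\int_\Om|g_s|^{\,p-1}\,|g_s|\,d\mu\le\|g_s\|_\ii^{\,p-1}\,\|g_s\|_1
\]
then forces $\|T_s(f)-T_{s_0}(f)\|_p\to0$ as $s\to s_0$.

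To pass to an arbitrary $f\in\mc L^p$, I would use that $\mc L^1\cap\mc L^\ii$ is dense in $\mc L^p$ (e.g., simple functions supported on sets of finite measure belong to it, by $\sg$-finiteness). Given $\ep>0$, pick $h\in\mc L^1\cap\mc L^\ii$ with $\|f-h\|_p\le\ep$; then
\[
\|T_s(f)-T_{s_0}(f)\|_p\le\|T_s(f-h)\|_p+\|T_s(h)-T_{s_0}(h)\|_p+\|T_{s_0}(h-f)\|_p\le 2\ep+\|T_s(h)-T_{s_0}(h)\|_p,
\]
and the previous step gives $\limsup_{s\to s_0}\|T_s(f)-T_{s_0}(f)\|_p\le2\ep$; letting $\ep\to0$ completes the argument.

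I do not expect a genuine obstacle here: the proof is just a combination of an interpolation inequality, density, and uniform boundedness. The one point requiring a little care is selecting a dense subspace on which the $\mc L^\ii$-norms $\|T_s(f)-T_{s_0}(f)\|_\ii$ stay bounded uniformly in $s$ — this is why one starts from $\mc L^1\cap\mc L^\ii$ rather than from $\mc L^1$, and it is also where the hypothesis $p<\ii$ enters, through the density step.
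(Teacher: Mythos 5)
Your proposal is correct and follows essentially the same route as the paper: reduce to $f\in\mc L^1\cap\mc L^\ii$ via density in $\mc L^p$ and the uniform $\mc L^p$-contractivity of the $T_s$, then on $\mc L^1\cap\mc L^\ii$ deduce $\|\cdot\|_p$-convergence from the $\mc L^1$-convergence combined with the uniform $\mc L^\ii$-bound. The paper merely packages these two ingredients slightly differently (an explicit truncation $f=f_{m_0}+g_{m_0}$ in place of an abstract density argument, and a normalization giving $|h_n|^p\leq|h_n|$ in place of your inequality $\|g_s\|_p^p\leq\|g_s\|_\ii^{\,p-1}\|g_s\|_1$), so no substantive difference remains.
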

\begin{proof}
Clearly, it is sufficient to verify that
\[
\big\|T_{s_n}(f)-T_{s_0}(f)\big\|_p\to0\text{ \ \ as\,\ }s_n\to s_0 \ \ \forall \ f\in\mc L^p.
\]
So, let $f\in\mc L^p$ and, given $m\in\mbb N$, denote
\[
f_m=f\,\chi_{\left\{|f|\leq m^{-1}\right\}}+f\,\chi_{\left\{|f|\ge m\right\}}\text{ \ \ and \ \ }g_m=f-f_m.
\]
Fix $\ep>0$ and define $m_0$ to be such that $\|f_{m_0}\|_p\leq\ep/4$. As $T_s$ is a contraction in $\mc L^p$ for each $s\ge0$, we have
\[
\big\|T_{s_n}(f_{m_0})-T_{s_0}(f_{m_0})\big\|_p\leq\frac\ep2\, \ \ \forall \ n\in\mbb N.
\]

Next, note that $g_{m_0}\in\mc L^1\cap\mc L^\ii$. Since we can assume that $f\neq0$, we can also assume, without loss of generality, that $g_{m_0}\neq0$. Then, if we let
\[
h_n=\frac12\|g_{m_0}\|_\ii^{-1}\big(T_{s_n}(g_{m_0})-T_{s_0}(g_{m_0})\big),
\]
it follows, by the assumption, that $\|h_n\|_1\to0$ as $n\to\ii$. Besides, since $T_s$ is a contraction in $\mc L^\ii$ for each $s\ge0$, we have $-\mb1\leq h_n\leq\mb 1$, hence $|h_n|^p\leq|h_n|$, for each $n$, implying that
\[
\|h_n\|_p^p=\int_{\Om}|h_n|^pd\mu\leq\int_{\Om}|h_n|d\mu=\|h_n\|_1\to0\text{\, \ as\,\ }n\to\ii.
\]
Thus, there exists $n_0\in\mbb N$ such that
\[
\big\|T_{s_n}(g_{m_0})-T_{s_0}(g_{m_0})\big\|_p\leq\frac\ep2\, \ \ \forall \ n\ge n_0.
\]
Therefore, if $n\ge n_0$, it follows that
\[
\big\|T_{s_n}(f)-T_{s_0}(f)\big\|_p\leq\big\|T_{s_n}(f_{m_0})-T_{s_0}(f_{m_0})\big\|_p+\big\|T_{s_n}(g_{m_0})-T_{s_0}(g_{m_0})\big\|_p\leq\ep,
\]
and the proof is complete.
\end{proof}

\vskip5pt
The following proposition is to justify the existence of ergodic averages we are going to examine in this article. Note that the averages $M_t(\mc T)(f)$ below were actually introduced in \cite{mg} and \cite{sa} but in a less straightforward manner, based on a non-trivial application of Fubini's theorem to the product measure space $\mbb R_+\otimes\Om$.
\begin{pro}\label{p1}
Let $\nu$ be Lebesgue measure on $\mbb R_+$, and  let $\bt: \mbb R_+\to\mbb C$ be a $\nu$-measurable function with $\|\bt\|_\ii<\ii$. Given $1\leq p<\ii$ and $f\in\mc L^p$, the function $\bt(s)T_s(f)$ acting on the measure space $(\mc R_+,\nu)$ with the values in the Banach space $(\mc L^p, \|\cdot\|_p)$ is Bochner $\nu$-integrable on $[0,t]$ for every $t>0$. Therefore, the ergodic averages
\[
M_t(\mc T,\bt)(f):=\frac1t\int_0^t\bt(s)T_s(f)ds\in \mc L^p.
\]
are defined for all $f\in\mc L^p$ and $t>0$.
\vskip5pt
\noindent
In particular, the averages
\[
M_t(\mc T)(f):=\frac1t\int_0^tT_s(f)ds\in \mc L^p
\]
are also defined for all $f\in\mc L^p$ and $t>0$.
\end{pro}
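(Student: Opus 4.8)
The plan is to verify the two standard criteria for Bochner integrability of a Banach-space-valued function on a finite measure space (here $[0,t]$ with Lebesgue measure $\nu$): (i) strong $\nu$-measurability, and (ii) $\nu$-integrability of the scalar function $s\mapsto\|\bt(s)T_s(f)\|_p$. Once both are established, Bochner's theorem gives the conclusion, and the averages $M_t(\mc T,\bt)(f)$ (and, taking $\bt\equiv1$, the $M_t(\mc T)(f)$) are then well-defined elements of $\mc L^p$.

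For the integrability of the norm, I would use Proposition \ref{p8}: since $\mc T=\{T_s\}_{s\ge0}\su DS$ is strongly continuous in $\mc L^1$, it is strongly continuous in $\mc L^p$ for $1<p<\ii$ as well, so for each fixed $f\in\mc L^p$ the map $s\mapsto T_s(f)$ is $\|\cdot\|_p$-continuous on $\mbb R_+$ (for $p=1$ this is the hypothesis on $\mc T$ directly). Because each $T_s$ is a contraction on $\mc L^p$, we get the uniform bound
\[
\|\bt(s)T_s(f)\|_p=|\bt(s)|\,\|T_s(f)\|_p\leq\|\bt\|_\ii\,\|f\|_p\qquad\text{for a.e. }s\in[0,t],
\]
and the right-hand side is a constant, hence $\nu$-integrable on $[0,t]$. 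This settles criterion (ii) and, incidentally, shows $\|M_t(\mc T,\bt)(f)\|_p\le\|\bt\|_\ii\|f\|_p$.

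For strong measurability, I would invoke Pettis's measurability theorem: since $\mc L^p$ with $1\le p<\ii$ is separable on a $\sg$-finite measure space (or, more carefully, since the closed linear span of the continuous curve $\{T_s(f):s\in[0,t]\}$ is separable), it suffices to check that $s\mapsto\bt(s)T_s(f)$ is weakly $\nu$-measurable, i.e.\ that $s\mapsto\langle\bt(s)T_s(f),\varphi\rangle$ is $\nu$-measurable for every $\varphi$ in the dual. The curve $s\mapsto T_s(f)$ is $\|\cdot\|_p$-continuous, hence Borel measurable as a map into $\mc L^p$; composing with the continuous linear functional $\varphi$ gives a continuous, hence measurable, scalar function, and multiplying by the $\nu$-measurable scalar $\bt(s)$ preserves measurability. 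Thus $s\mapsto\bt(s)T_s(f)$ is weakly measurable and almost separably valued, so by Pettis it is strongly $\nu$-measurable.

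The only point that requires a little care — and the closest thing to an obstacle — is the interplay between the $\nu$-measurability of $\bt$ and the continuity of the curve: one must ensure the product of a merely measurable scalar function with a continuous vector-valued function is strongly measurable, which is exactly what the Pettis argument above handles once separability of the relevant subspace is noted. With strong measurability and integrability of the norm in hand, Bochner's theorem applies on $[0,t]$, giving $M_t(\mc T,\bt)(f)\in\mc L^p$ for every $t>0$; specializing $\bt\equiv1$ yields the final assertion about $M_t(\mc T)(f)$.
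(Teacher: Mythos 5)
Your proposal is correct and follows essentially the same route as the paper: establish weak $\nu$-measurability of the curve via the (norm-)continuity of $s\mapsto T_s(f)$ supplied by Proposition \ref{p8}, upgrade to strong measurability by Pettis's theorem using separability of the range, dominate $\|\bt(s)T_s(f)\|_p$ by the constant $\|\bt\|_\ii\|f\|_p$, and conclude by Bochner's criterion. The only cosmetic difference is that the paper verifies measurability of $s\mapsto\|T_s(f)\|_p$ explicitly through the approximating simple functions, whereas you let it follow from strong measurability; both are fine.
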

\begin{proof}
Fix $f\in\mc L^p$ and let $q$ be such that $p^{-1}+q^{-1}=1$. Then, by virtue of Proposition \ref{p8} and H\"older's inequality, given $g\in\mc L^q$, the function
\[
\mbb R_+\ni s\longmapsto \int_{\Om}T_s(f)gd\mu
\]
is continuous on $\mbb R_+$, implying that the map
\[
S_f:\mbb R_+\la\mc L^p\text{ \ given by \ } S_f(s)=T_s(f)
\]
is {\it weakly $\nu$-measurable} as of \cite[Ch.\,V,\,\S\,4]{yo}. Since $S_f(\mc R_+)$ is a separable subset of the Banach space $(\mc L^p, \|\cdot\|_p)$, Pettis theorem \cite[Ch.\,V,\,\S\,4]{yo} implies that $S_f$ is {\it strongly $\nu$-measurable}, that is, there exists a sequence $U_n: \mbb R_+\to\mc L^p$ of simple maps, as in \cite[Ch.\,V,\,\S\,4]{yo}, such that
\[
\lim_{n\to\ii}\|T_s(f)-U_n(s)\|_p=0, \text{\ hence\,\ }\|T_s(f)\|_p=\lim_{n\to\ii}\|U_n(s)\|_p,\, \ \nu\text{-a.e.}
\] 
Therefore, as for any $n$ the function\ $\mbb R_+\ni s\longmapsto\|U_n(s)\|_p$ is trivially $\nu$-measurable, it follows that the function 
\[
\mbb R_+\ni s\longmapsto \|T_s(f)\|_p
\]  
is also $\nu$-measurable. Since, in addition,
\[
\|T_s(f)\|_p\leq\|f\|_p \ \, \forall \ s\in\mbb R_+,
\]
the function $\|T_s(f)\|_p$, as well as the function 
\[
\|\bt(s)T_s(f)\|_p=|\bt(s)|\,\|T_s(f)\|_p,
\]
is $\nu$-integrable on $[0,t]$, for each $t>0$.

Therefore, by \cite[Ch.\,V,\, \S\,5,\, Theorem 1]{yo}, the function $\bt(s)T_s(f)$ is Bochner $\nu$-intergable on $[0,t]$ for any given $t>0$, which completes the proof.
\end{proof}

Given $1\leq p<\ii$, we can now define the maximal function of the family $\{M_t(\mc T)\}_{t>0}$ on $\mc L^p$ as
\[
M(\mc T)^*(f)=\sup_{t>0}\big|M_t(\mc T)(f)\big|, \ \, f\in\mc L^p.
\]

We will need the following, stronger version, of Riesz convergence theorem:

\begin{pro}\label{p2}
If\, $f_n\to f$ in measure, then there exists a subsequence $\{n_k\}$ such that $f_{n_k}\to f$ a.u.
\end{pro}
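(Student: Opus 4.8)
The plan is to produce the subsequence by the standard Borel--Cantelli-type argument, which on an infinite measure space requires only the observation that the relevant exceptional sets automatically have finite measure. Recall that $f_n\to f$ in measure means $\mu\{|f_n-f|>\dt\}\to0$ for every $\dt>0$. Hence, for each $k\in\mbb N$ one may choose an index $n_k$, which can be taken strictly increasing in $k$, so that
\[
\mu\big\{|f_{n_k}-f|>2^{-k}\big\}\leq 2^{-k}.
\]
Set $E_k=\big\{|f_{n_k}-f|>2^{-k}\big\}$, so $\mu(E_k)\leq 2^{-k}<\ii$ for every $k$.

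Next, given $\ep>0$, I would fix $K\in\mbb N$ with $\sum_{k\geq K}2^{-k}=2^{-K+1}\leq\ep$ and put
\[
G=\Om\sm\bigcup_{k\geq K}E_k.
\]
Then $\mu(\Om\sm G)\leq\sum_{k\geq K}\mu(E_k)\leq\ep$, and if $\om\in G$ then $\om\notin E_k$ for all $k\geq K$, that is, $|f_{n_k}(\om)-f(\om)|\leq 2^{-k}$. Consequently
\[
\big\|(f-f_{n_k})\chi_G\big\|_\ii\leq 2^{-k}\la0\ \text{ as }\ k\to\ii,
\]
and since $\ep>0$ was arbitrary, this is exactly the definition of $f_{n_k}\to f$ a.u.

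There is no substantive obstacle here; the only point deserving a word is the interpretation of ``convergence in measure'' on the $\sg$-finite infinite space $(\Om,\mu)$, which we take to be $\mu\{|f_n-f|>\dt\}\to0$ for each $\dt>0$. With that reading, the sets $E_k$ are of finite (indeed summable) measure, the set $G$ above is a bona fide Egorov set with $\mu(\Om\sm G)\leq\ep$, and the argument goes through verbatim as in the finite-measure case.
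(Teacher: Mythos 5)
Your proof is correct and follows essentially the same Borel--Cantelli-type argument as the paper: choose $n_k$ so that the exceptional sets have summable measure, then discard a tail union of small total measure to obtain the Egorov set. If anything, your quantifier ordering is slightly cleaner, since you fix the subsequence once and for all (with bounds $2^{-k}$ independent of $\ep$) and only afterwards produce, for each $\ep$, the set $G$, whereas the paper fixes $\ep$ first and lets the choice of $n_k$ depend on it.
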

\begin{proof}

Fix $\ep>0$, Since, for every given $k\in\mbb N$, we have
\[
\mu\big\{|f_n-f|>1/k\big\}\to 0,
\]
there exists $n_k$ such that
\[
\mu\big\{|f_{n_k}-f|>1/k\big\}\leq\frac\ep{2^k}.
\]
Therefore, if we define
\[
\Om_k=\big\{|f_{n_k}-f|\leq1/k\big\}\text{ \ and \ } \Om_\ep=\bigcap_{k=1}^\ii\Om_k,
\]
then $\mu(\Om\sm\Om_k)\leq\displaystyle\frac\ep{2^k}$ for each $k\in\mbb N$ implies that 
$\mu(\Om\sm\Om_\ep)\leq\ep$ and
\[
\big\|\,|f_{n_k}-f|\chi_{\Om_\ep}\big\|_\ii\leq1/k,\text{ \ hence \ }\big\|\,|f_{n_k}-f|\chi_{\Om_\ep}\big\|_\ii\to 0,
\]
that is, $f_{n_k}\to f$ a.u.
\end{proof}

We shall also notice the following known fact, together with a corollary, that will be used repeatedly in the sequel. A short proof is provided, due to their special importance in this article.

\begin{pro}\label{p9}
Given $1\leq p<\ii$, the unit ball in $\mc L^p$ is sequentially closed with respect to a.e. convergence.
\end{pro}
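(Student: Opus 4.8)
The plan is to derive this directly from Fatou's lemma, which is precisely the statement that the $\mc L^p$-quasinorm is lower semicontinuous with respect to a.e. convergence. Let $\{f_n\}$ be a sequence lying in the unit ball of $\mc L^p$, so that $\|f_n\|_p\leq 1$ for every $n$, and suppose $f_n\to f$ a.e. for some $f\in\mc L^0$. We must show that $f\in\mc L^p$ with $\|f\|_p\leq 1$.

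First I would observe that, since the function $\mbb C\ni z\longmapsto|z|^p$ is continuous, the hypothesis $f_n\to f$ a.e. yields $|f_n|^p\to|f|^p$ a.e., where each $|f_n|^p$ and $|f|^p$ is a nonnegative measurable function on $\Om$. Applying Fatou's lemma to the sequence $\{|f_n|^p\}$ then gives
\[
\int_{\Om}|f|^p\,d\mu=\int_{\Om}\liminf_{n\to\ii}|f_n|^p\,d\mu\leq\liminf_{n\to\ii}\int_{\Om}|f_n|^p\,d\mu=\liminf_{n\to\ii}\|f_n\|_p^p\leq 1.
\]
Hence $\|f\|_p\leq 1$, so that $f$ belongs to the unit ball of $\mc L^p$, as required.

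There is no genuine obstacle in this argument: Fatou's lemma holds over an arbitrary measure space, so neither the $\sg$-finiteness nor the infiniteness of $\mu$ enters, and a.e. convergence is exactly the hypothesis needed to pass to the pointwise limit inside the integral. The only minor point worth recording is that the limit $f$ is assumed a priori to be an element of $\mc L^0$ (i.e. a.e. finite), which is implicit in the phrase ``a.e.\ convergence'' here; the content of the proposition is then solely the norm bound, which the computation above supplies.
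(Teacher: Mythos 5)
Your argument is correct and is essentially identical to the paper's own proof: both pass from $f_n\to f$ a.e.\ to $|f_n|^p\to|f|^p$ a.e.\ and then apply Fatou's lemma to conclude $\int_\Om|f|^p\,d\mu\leq\liminf_n\|f_n\|_p^p\leq1$. No further comment is needed.
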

\begin{proof}
Let $\|f_n\|_p\leq1$ for each $n$ and $f_n\to f$ a.e. Then $|f_n|^p\to|f|^p$ a.e. also, implying, by Fatou's lemma, that
\[
\int_\Om|f|^pd\mu\leq\liminf\int_\Om|f_n|^pd\mu\leq1,
\]
hence $\|f\|_p\leq1$.
\end{proof}

Since a.u. convergence implies a.e. convergence, and in view of Proposition \ref{p2}, we also have this:

\begin{cor}\label{c3}
For every $1\leq p<\ii$, the unit ball in $\mc L^p$ is sequentially closed with respect to a.u. convergence and convergence in measure.
\end{cor}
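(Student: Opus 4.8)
The plan is to reduce both assertions to Proposition \ref{p9}, which already disposes of the case of a.e. convergence via Fatou's lemma. First I would take a sequence $\{f_n\}$ lying in the unit ball of $\mc L^p$ (so $\|f_n\|_p\leq1$ for every $n$) with $f_n\to f$ a.u. for some $f\in\mc L^0$. Since, as observed right after the definition of a.u. convergence, every a.u.\ convergent sequence converges a.e., we get $f_n\to f$ a.e., and Proposition \ref{p9} immediately yields $\|f\|_p\leq1$. Hence the unit ball is sequentially closed with respect to a.u.\ convergence.

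For convergence in measure, suppose $\|f_n\|_p\leq1$ for all $n$ and $f_n\to f$ in measure. Here one cannot argue directly, because convergence in measure of the full sequence need not entail its a.e.\ convergence. Instead I would apply Proposition \ref{p2} to extract a subsequence $\{f_{n_k}\}$ with $f_{n_k}\to f$ a.u., hence $f_{n_k}\to f$ a.e.; then Proposition \ref{p9} again gives $\|f\|_p\leq1$. This proves sequential closedness of the unit ball with respect to convergence in measure and completes the argument.

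Since the proof is just a concatenation of the two preceding propositions together with the elementary implication ``a.u.\ convergence $\Rightarrow$ a.e.\ convergence,'' there is no genuine obstacle. The only point that requires a moment of care is that, in the convergence-in-measure case, one must first pass to a suitable subsequence (supplied by Proposition \ref{p2}) before invoking Proposition \ref{p9}, since the full sequence may fail to converge a.e.
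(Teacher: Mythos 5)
Your proposal is correct and follows exactly the route the paper intends: the a.u.\ case reduces to Proposition \ref{p9} via the implication ``a.u.\ $\Rightarrow$ a.e.,'' and the convergence-in-measure case passes to an a.u.-convergent subsequence supplied by Proposition \ref{p2} before applying Proposition \ref{p9}. Your remark that the subsequence extraction is the one point requiring care is precisely why the paper cites Proposition \ref{p2} in deriving the corollary.
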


\section{Maximal ergodic inequality for the family $\{M_t(\mc T,\bt)\}_{t>0}$}

Let $\bt: \mbb R_+\to\mbb C$ is a Lebesgue measurable function with $\|\bt\|_\ii\leq C<\ii$. Denote
\[
M(\mc T,\bt)^*(f)=\sup_{t>0}|M_t(\mc T,\bt)(f)|,
\]
the maximal function of the family $\{M_t(\mc T,\bt)\}_{t>0}$.

\begin{teo}\label{t2} Let $1\leq p<\ii$. If $\mc T=\{T_s\}_{s\ge0}$ and $\bt(s)$ are as above, then
\begin{equation}\label{e3}
\mu\,\Big\{M(\mc T,\bt)^*(f)>\lb\Big\}\leq\left(\frac{4C\,\|f\|_p}\lb\right)^p
\end{equation}
and, in particular,
\begin{equation}\label{eq4}
\mu\,\Big\{M(\mc T)^*(f)>\lb\Big\}\leq\left(\frac{4\,\|f\|_p}\lb\right)^p
\end{equation}
for all $f\in \mc L^p$ and $\lb>0$.
\end{teo}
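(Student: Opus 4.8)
The goal is to deduce the continuous-time maximal inequality (\ref{e3}) from the discrete-time maximal ergodic inequality in Theorem~\ref{t1}. The strategy is a standard discretization: fix $f\in\mc L^p$ and $\lb>0$, compare the continuous average $M_t(\mc T,\bt)(f)$ with a discrete average built from the operators $T_{j\dt}$ (and $T_{j\dt-s}$) for a fixed mesh $\dt>0$, apply Theorem~\ref{t1} to a suitable majorizing discrete average, and then pass to the limit $\dt\to0^+$ controlling the error with strong continuity.

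\textbf{Step 1: Pointwise domination.} First I would write, for $t>0$, choosing $n=n(t,\dt)$ with $n\dt\le t<(n+1)\dt$,
\[
\big|M_t(\mc T,\bt)(f)\big|=\frac1t\left|\int_0^t\bt(s)T_s(f)\,ds\right|\leq\frac Ct\int_0^t\big|T_s(f)\big|\,ds\leq\frac{C}{t}\int_0^{(n+1)\dt}\big|T_s(f)\big|\,ds.
\]
Splitting $[0,(n+1)\dt]=\bigcup_{j=0}^{n}[j\dt,(j+1)\dt)$ and writing $T_s=T_{s-j\dt}T_{j\dt}$ on each subinterval, one gets the bound $\frac{C}{t}\sum_{j=0}^{n}\int_0^\dt T_r\big(|T_{j\dt}(f)|\big)\,dr$, using positivity of the modulus together with the contraction/positivity features encoded in $T\in DS$ (here one uses $|T_{s-j\dt}T_{j\dt}(f)|\le T_{s-j\dt}(|T_{j\dt}(f)|)$, valid after passing to the linear modulus/dominating positive operator; the Dunford--Schwartz structure guarantees such a positive dominating operator). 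Averaging in $r$ over $[0,\dt]$, this is controlled by a constant (depending on the ratio $(n+1)\dt/t\le 2$ for $\dt\le t$, and by $2C$ overall) times the discrete average $\frac1{n+1}\sum_{j=0}^{n}g$, where $g$ is the positive function $g:=\sup_{0\le r\le\dt}T_r(|f|)$-type majorant — more precisely one introduces the auxiliary operator $S:=\frac1\dt\int_0^\dt T_r\,dr$ applied to $|T_{j\dt}(f)|$ and notes $S\in DS$ as an average of Dunford--Schwartz operators, so that $T_{j\dt}$ composed appropriately stays in $DS$.

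\textbf{Step 2: Apply the discrete maximal inequality.} With the majorant from Step~1 of the form $4C\cdot\sup_n\big|\frac1n\sum_{k=0}^{n-1}R^k(h)\big|$ for a single $R\in DS$ and $h\in\mc L^p$ with $\|h\|_p\le\|f\|_p$, Theorem~\ref{t1} yields, for the restricted supremum over $t\ge\dt$,
\[
\mu\Big\{\sup_{t\ge\dt}\big|M_t(\mc T,\bt)(f)\big|>\lb\Big\}\leq\mu\Big\{4C\cdot M(R)^*(|h|)>\lb\Big\}\leq\left(\frac{2\cdot4C\,\|f\|_p}{\lb}\right)^p.
\]
I would be somewhat generous with the constant here; the statement only requires $4C$, so some care in Step~1 (keeping the geometric/averaging losses from inflating the factor $2$ in Theorem~\ref{t1} beyond the stated $4C$) is needed — this is the quantitatively delicate point but not conceptually hard, since $t\ge\dt$ makes $(n+1)/n\to1$.

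\textbf{Step 3: Remove the restriction $t\ge\dt$ and let $\dt\to0^+$.} For small $t$, strong continuity in $\mc L^p$ (Proposition~\ref{p8}) gives $\|M_t(\mc T,\bt)(f)\|_p\to$ something controlled; more to the point, $\sup_{t\ge\dt}|M_t(\mc T,\bt)(f)|\nearrow M(\mc T,\bt)^*(f)$ pointwise a.e. as $\dt\downarrow0$, so by continuity of measure from below, $\mu\{M(\mc T,\bt)^*(f)>\lb\}=\lim_{\dt\to0^+}\mu\{\sup_{t\ge\dt}|M_t(\mc T,\bt)(f)|>\lb\}$ (after passing to $\lb'<\lb$ and using monotone convergence, then letting $\lb'\uparrow\lb$). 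Combining with Step~2 gives (\ref{e3}); taking $\bt\equiv1$, $C=1$ gives (\ref{eq4}). \textbf{The main obstacle} is Step~1: organizing the pointwise domination so that (a) one genuinely lands on a discrete ergodic average of a \emph{single} Dunford--Schwartz operator applied to a \emph{single} function of norm $\le\|f\|_p$, and (b) the accumulated constant is exactly $4C$ and no worse; the positivity bookkeeping (using that every $T\in DS$ is dominated by a positive $DS$ operator, and that $\frac1\dt\int_0^\dt T_r\,dr\in DS$) is where the argument must be written carefully.
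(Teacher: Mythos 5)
Your overall strategy --- discretize, dominate the continuous average by a discrete Ces\`aro average, invoke Theorem \ref{t1}, then pass to the full supremum --- is the same as the paper's, but the step you yourself flag as ``the main obstacle'' is a genuine gap, and the way you have set it up will not close. Writing $T_s=T_{s-j\dt}T_{j\dt}$ and moving the modulus inward gives $\int_{j\dt}^{(j+1)\dt}|T_s(f)|\,ds\leq\int_0^\dt |T_r|\big(|T_{j\dt}(f)|\big)\,dr$, i.e.\ a fixed averaging operator $S=\frac1\dt\int_0^\dt|T_r|\,dr$ applied to the $j$-dependent functions $|T_\dt^{\,j}(f)|$. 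Summing over $j$ then produces $S\big(\frac1n\sum_j|T_\dt|^j(|f|)\big)$ at best, which is \emph{not} a Ces\`aro average $\frac1n\sum_k R^k(h)$ of a single $DS$ operator applied to a single function, so Theorem \ref{t1} does not apply to it; and trying to push the extra $S$ through the weak-type bound afterwards costs a factor $p/(p-1)$ via submajorization and fails outright at $p=1$. The repair is to factor in the opposite order, $T_s=T_{j\dt}T_{s-j\dt}$, so that $|T_s(f)|\leq|T_\dt|^{\,j}\big(|T_{s-j\dt}(f)|\big)$ with the \emph{power of the single operator on the outside}; since $|T_\dt|^j$ is positive and linear it passes through the integral over $s\in[j\dt,(j+1)\dt)$, yielding $|T_\dt|^{\,j}(g)$ with the single function $g=\int_0^\dt|T_r(f)|\,dr$ and $\|g\|_p\leq\dt\,\|f\|_p$. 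This is exactly what the paper does with $\dt=1/m$ and $t=n/m$ rational, so no rounding of $t$ is needed and the constant stays at $4C$ (one factor $2$ from $\big|\int\bt(s)T_s(f)\,ds\big|\leq2\int|\bt(s)T_s(f)|\,ds$ and one factor $2$ from Theorem \ref{t1}).

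There is a second gap in your Step 3: you treat $\sup_{t\ge\dt}|M_t(\mc T,\bt)(f)|$ and $M(\mc T,\bt)^*(f)$ as measurable and apply continuity of measure from below, but these are uncountable suprema and their measurability is precisely what needs an argument. The paper circumvents this by first obtaining the bound for the countable supremum $\sup_{0<r\in\mbb Q}|M_r(\mc T,\bt)(f)|$, calling $E$ the corresponding exceptional set, and then showing that $\big\||M_t(\mc T,\bt)(f)|\chi_{\Om\sm E}\big\|_\ii\leq\lb$ for every real $t>0$: take rationals $r_n\to t$, use strong continuity (Proposition \ref{p8}) to get $M_{r_n}(\mc T,\bt)(f)\to M_t(\mc T,\bt)(f)$ in $\mc L^p$, hence in measure, hence a.u.\ along a subsequence by Proposition \ref{p2}. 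This places $\{|M_t(\mc T,\bt)(f)|>\lb\}$ inside the single set $E$ for all $t>0$ simultaneously, which is what (\ref{e3}) asserts, without ever needing the uncountable supremum to be measurable. With these two repairs your outline becomes the paper's proof.
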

\begin{proof}
Fix $f\in\mc L^p$, $\lb>0$, $\displaystyle\frac nm\in \mbb Q$, where $n, m \in \mbb N$, and denote  
\[
g = \int_0^1\big|T_{s/m}(f)\big|ds.
\]
Let $|T|$ be the linear modulus of $T\in DS$ -- see, for example, \cite[\S\,4.1]{kr}. Then $|T|\in DS^+$ and 
\[
|T^k(f)|\leq|T|^k(|f|)\ \ \forall\ f\in\mc L^p,
\]
thus, we can write
\begin{equation*}
\begin{split}
\big|M_{n/m}(\mc T,\bt)(f)\big|&=\Big|\frac mn\int_0^{\frac nm}\bt(s)T_s(f) ds\Big| \leq2\,\frac mn\int_0^{\frac nm}\big|\bt(s)T_s(f)\big|ds\\
&=2\,\frac mn\int_0^{\frac nm}|\bt(s)|\big|T_s(f)\big|ds\leq 2C\,\frac mn\int_0^{\frac nm}\big|T_s(f)\big|ds\\
&=\frac{2C}n \int_0^{n}\big|T_{s/m}(f)\big| ds=\frac{2C}n\sum_{k=0}^{n-1}\int_k^{k+1}\big|T_{s/m}(f)\big|ds\\
&=\frac{2C}n\sum_{k=0}^{n-1}\int_0^1\big|T_{(s+k)/m}(f)\big|ds=\frac{2C}n\sum_{k=0}^{n-1}\int_0^1\big|T^k_{1/m}(T_{s/m}(f))\big|ds\\
&\leq \frac{2C}n\sum_{k=0}^{n-1}\int_0^1\big|T_{1/m}\big|^k\left(\big|T_{s/m}(f)\big|\right)ds\leq 2C\,\frac1n\sum_{k=0}^{n-1}\big|T_{1/m}\big|^k(g)\\
&=2C\,M_n\left(\big|T_{1/m}\big|\right)(g),
\end{split}
\end{equation*}
which, by Theorem \ref{t1} applied to the operator $|T_{1/m}|\in DS^+$, entails, as $g\ge 0$,
\begin{equation*}
\begin{split}
\mu\left\{\big|M_{n/m}(\mc T,\bt)(f)\big|>\lb\right\}&\leq\mu\Big\{M_n(|T_{1/m}|)(g)>\frac\lb{2C}\Big\}\\&\leq\left(\frac{4C\,\|g\|_p}\lb\right)^p\leq\left(\frac{4C\,\|f\|_p}\lb\right)^p
\end{split}
\end{equation*}
for all $m,n\in\mbb N$, hence
\[
\mu\left\{\sup_{0<r\in\mbb Q}\big|M_r(\mc T,\bt)(f)\big|>\lb\right\}\leq\left(\frac{4C\,\|f\|_p}\lb\right)^p.
\]

To finish the argument, fix $f\in\mc L^p$ and $\lb>0$ and denote
\[
f_t=\big|M_t(\mc T,\bt)(f)\big|, \ \ \ E=\Big\{\sup_{0<r\in\mbb Q}f_r>\lb\Big\}.
\]
Then we have
\[
\mu(E)\leq\left(\frac{4C\,\|f\|_p}\lb\right)^p\text{ \ and \ }\big\|f_r\chi_{\Om\sm E}\big\|_\ii\leq\lb\ \ \, \forall\ 0<r\in\mbb Q.
\]
If\, $t>0$ and $0<r_n\to t$ with $\{r_n\}\su \mbb Q$, then we have $f_{r_n}\to f_t$ in $\mc L^p$, hence in measure. Then, by Proposition \ref{p2}, there exists a subsequence $\{r_{n_k}\}$ such that $f_{r_{n_k}}\to f$ a.u., implying that
\[
\big\|f_t\chi_{\Om\sm E}\big\|_\ii\leq\lb,
\]
hence
\[
\mu\big\{f_t>\lb\big\}\leq\mu(E)\leq\left(\frac{4C\,\|f\|_p}\lb\right)^p\ \ \, \forall\ t>0,
\]
which entails the maximal inequality (\ref{e3}), hence (\ref{eq4}), for all $f\in\mc L^p$ and $\lb>0$.
\end{proof}

Now, as a corollary of Theorem \ref{t2}, we will show that the a.u.-convergence set of the net $\{M_t(\mc T,\bt)\}_{t>0}$ and, in particular, of $\{M_t(\mc T)\}_{t>0}$, as $t\to\ii$ (or as $t\to0$) is closed in the Banach space $(\mc L^p, \|\cdot\|_p)$ for each $1\leq p<\ii$. For this we will utilize the notion of {\it measure topology} in $\mc L^0$, which is given by the following system of neighborhoods of zero:
\[
\mc N(\ep,\dt)=\left\{ f\in\mc L^0:\ \mu\{|f|>\dt\}\leq\ep\right\}, \ \, \ep>0,\ \dt>0.
\]

\begin{pro}\label{p3}
Let $(X,\|\cdot\|)$ be a Banach space, and let $M_t: X\to\mc L^0$, $t>0$, be a family of lenear operators. If the maximal operator $M^*: (X,\|\cdot\|)\to(\mc L^0,t_\mu)$ is continuous at $0\in X$, then the set
\[
X_c=\big\{\,f\in X: \{M_t(f)\}\text{\ converges a.u. as}\ \,t\to\ii\text{\ (or as\ } t\to0)\big\}
\]
is closed in $X$.
\end{pro}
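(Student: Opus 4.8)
The plan is to show that the complement of $X_c$ is open, or equivalently, that $X_c$ contains all its limit points, by a standard $3\ep$-type argument adapted to a.u. convergence. Suppose $f_n \in X_c$ with $f_n \to f$ in $X$; I want to show $f \in X_c$. Since $\mc L^0$ is complete with respect to a.u. convergence by Proposition \ref{p0}, it suffices to produce, for each $\ep > 0$, a single set $G \su \Om$ with $\mu(\Om \sm G) \le \ep$ on which the net $\{M_t(f)\}$ is uniformly Cauchy as $t \to \ii$ (or $t \to 0$); then the a.u. limit exists.

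First I would fix $\ep > 0$ and $\dt > 0$. Using continuity of $M^*$ at $0$ from $(X,\|\cdot\|)$ into $(\mc L^0, t_\mu)$, choose $\gm > 0$ so that $\|g\| < \gm$ implies $M^*(g) \in \mc N(\ep/2, \dt)$, i.e. $\mu\{M^*(g) > \dt\} \le \ep/2$. Pick $n$ with $\|f_n - f\| < \gm$, so that $\mu\{M^*(f_n - f) > \dt\} \le \ep/2$; call the complement of that bad set $G_1$, so $\mu(\Om \sm G_1) \le \ep/2$ and $\sup_t |M_t(f_n - f)|\,\chi_{G_1} \le \dt$ pointwise, hence $\|(M_t(f) - M_t(f_n))\chi_{G_1}\|_\ii \le \dt$ for all $t$. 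Since $f_n \in X_c$, there is a set $G_2$ with $\mu(\Om \sm G_2) \le \ep/2$ on which $\{M_t(f_n)\}$ converges uniformly as $t \to \ii$; in particular it is uniformly Cauchy on $G_2$. Put $G = G_1 \cap G_2$, so $\mu(\Om \sm G) \le \ep$, and on $G$ we estimate, for $t, t'$ large,
\[
\|(M_t(f) - M_{t'}(f))\chi_G\|_\ii \le 2\dt + \|(M_t(f_n) - M_{t'}(f_n))\chi_G\|_\ii,
\]
the last term being $< \dt$ once $t, t'$ are large by uniform Cauchyness on $G_2 \supseteq G$; thus $\limsup_{t,t'} \|(M_t(f) - M_{t'}(f))\chi_G\|_\ii \le 2\dt$.

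The remaining issue is that $\dt$ was fixed before $G$ was chosen, so $G$ depends on $\dt$; I cannot directly let $\dt \to 0$ with a single $G$. To fix this I would run the above with a sequence $\dt_j \downarrow 0$ and budget $\ep/2^{j+1}$ instead of $\ep/2$: at stage $j$ I get $G^{(j)}$ with $\mu(\Om \sm G^{(j)}) \le \ep/2^j$ on which $\limsup \|(M_t(f) - M_{t'}(f))\chi_{G^{(j)}}\|_\ii \le 2\dt_j$. Setting $G = \bigcap_j G^{(j)}$ gives $\mu(\Om \sm G) \le \ep$, and on $G$ we have $\limsup_{t,t'} \|(M_t(f) - M_{t'}(f))\chi_G\|_\ii \le 2\dt_j$ for every $j$, hence equals $0$; that is, $\{M_t(f)\chi_G\}$ is uniformly Cauchy as $t \to \ii$. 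Since $\ep > 0$ was arbitrary, this furnishes an a.u. Cauchy net, and completeness (Proposition \ref{p0}) yields an a.u. limit, so $f \in X_c$. The main obstacle is precisely this $\dt$-before-$G$ dependency; everything else is the routine triangle-inequality bookkeeping, and the argument for $t \to 0$ is identical with "large $t, t'$" replaced by "small $t, t'$".
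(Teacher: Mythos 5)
Your argument is correct and follows essentially the same route as the paper's proof: approximate $f$ by some $f_{k_0}\in X_c$, use continuity of $M^*$ at $0$ to control $M_t(f-f_{k_0})$ outside a set of measure at most $\ep/2$, combine this with the a.u.\ Cauchy property of $\{M_t(f_{k_0})\}$ on another such set, and invoke the a.u.\ completeness of $\mc L^0$ (Proposition \ref{p0}). The only difference is that you explicitly resolve the dependence of the exceptional set on $\dt$ by diagonalizing over $\dt_j\downarrow 0$ with geometric measure budgets, whereas the paper stops at a set depending on both $\ep$ and $\dt$ and leaves this standard refinement implicit.
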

\begin{proof}
Assume that $t\to\ii$ in the dedinition of $X_c$. Fix $\ep>0$ and $\dt>0$ and let a sequence $\{f_k\}\su X_c$ and $f\in X$ be such that $\|f-f_k\|\to 0$. Then the continuity of $M^*$ at $0\in X$ implies there exist $f_{k_0}$ and $G_0\su\Om$ satisfying conditions
\[
\mu(\Om\sm G_0)\leq\frac\ep2\text{\ \, and\ \, } \Big\|\sup_{t>0}M_t(f-f_{k_0})\chi_{G_0}\Big\|_\ii\leq\frac\dt3.
\]

Next, since the net $\{M_t(f_{k_0})\}$ converges a.u. as $t\to\ii$, there exist $G_1\su\Om$ and $t_0>0$ such that
\[
\mu(\Om\sm G_1)\leq\frac\ep2\text{\ \, and\ \, } \big\|\big(M_{t^\pr}(f_{k_0})-M_{t^{\pr\pr}}(f_{k_0})\big)\chi_{G_1}\big\|_\ii\leq\frac\dt3\ \ \ \forall\, \ t^\pr,t^{\pr\pr}\ge t_0.
\]

Now, if we define\, $G=G_0\cap G_1$, then $\mu(\Om\sm G)\leq\ep$ and
\begin{equation*}
\begin{split}
\big\|\big(M_{t^\pr}(f)-M_{t^{\pr\pr}}(f)\big)\chi_G\big\|_\ii&\leq\big\|M_{t^\pr}(f-f_{k_0})\chi_G\big\|_\ii+\big\|M_{t^{\pr\pr}}(f-f_{k_0})\chi_G\big\|_\ii\\
&+\big\|\big(M_{t^\pr}(f_{k_0})-M_{t^{\pr\pr}}(f_{k_0})\big)\chi_G\big\|_\ii\leq\dt\, \ \ \forall\, \ t^\pr,t^{\pr\pr}\ge t_0,
\end{split}
\end{equation*}
thus, the net $\{M_t(f)\}_{t>0}$ is Cauchy as $t\to\ii$. This, in view of Proposition \ref{p0}, implies that this net converges a.u. as $t\to\ii$ in $\mc L^0$, hence $f\in X_c$. 
\vskip5pt
The case $t\to0$ is dealt with in the same manner.
\end{proof}

\begin{cor}\label{c1}
Given $1\leq p<\ii$, the a.u.-convergence set of the net $\{M_t(\mc T,\bt)\}_{t>0}$, hence of $\{M_t(\mc T)\}_{t>0}$, as $t\to\ii$ (or as $t\to0$) is closed in $\mc L^p$.
\end{cor}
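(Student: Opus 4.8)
The plan is to deduce this immediately from Proposition \ref{p3} combined with the maximal inequality of Theorem \ref{t2}. I would take $X=(\mc L^p,\|\cdot\|_p)$ and, for $t>0$, let $M_t=M_t(\mc T,\bt):\mc L^p\to\mc L^0$, which are well-defined linear operators by Proposition \ref{p1}. Then $M^*=M(\mc T,\bt)^*$ is the maximal function of this family, and the first thing to check is that it is actually a maximal \emph{operator}, i.e.\ $M^*(f)\in\mc L^0$ for every $f\in\mc L^p$: letting $\lb\to\ii$ in \eqref{e3} gives $\mu\{M^*(f)>\lb\}\to0$, so $M^*(f)<\ii$ a.e.

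The core step is to verify the hypothesis of Proposition \ref{p3}, namely that $M^*:(\mc L^p,\|\cdot\|_p)\to(\mc L^0,t_\mu)$ is continuous at $0$. Fix $\ep>0$ and $\dt>0$. By \eqref{e3},
\[
\mu\{M^*(f)>\dt\}\leq\left(\frac{4C\,\|f\|_p}{\dt}\right)^p,
\]
so if $\|f\|_p\leq\dt\,\ep^{1/p}/(4C)$ then $M^*(f)\in\mc N(\ep,\dt)$. Since $\ep$ and $\dt$ were arbitrary, $M^*$ maps a suitable norm-ball about $0\in\mc L^p$ into any prescribed basic neighborhood of $0$ in the measure topology, which is precisely continuity at $0$.

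With this in hand, Proposition \ref{p3} applies and gives that the a.u.-convergence set of $\{M_t(\mc T,\bt)\}_{t>0}$ as $t\to\ii$ (or as $t\to0$) is closed in $\mc L^p$. The assertion for $\{M_t(\mc T)\}_{t>0}$ is then the special case $\bt\equiv1$ (so $C=1$), with \eqref{eq4} used in place of \eqref{e3}; equivalently, one repeats the same argument verbatim with $M(\mc T)^*$ in place of $M(\mc T,\bt)^*$.

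There is essentially no serious obstacle here: the substantive content is already contained in Theorem \ref{t2} and Proposition \ref{p3}. The only points needing a moment's care are recognizing that the weak-type $(p,p)$ bound \eqref{e3} is exactly the $t_\mu$-continuity of $M^*$ at the origin, and confirming that $M^*$ genuinely takes values in $\mc L^0$ so that Proposition \ref{p3} is applicable.
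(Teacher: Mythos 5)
Your proof is correct and follows essentially the same route as the paper: the weak-type $(p,p)$ inequality of Theorem \ref{t2} yields continuity of $M(\mc T,\bt)^*$ at $0$ from $(\mc L^p,\|\cdot\|_p)$ to $(\mc L^0,t_\mu)$, and Proposition \ref{p3} then gives closedness of the a.u.-convergence set. Your extra observation that $M^*(f)\in\mc L^0$ (finiteness a.e., obtained by letting $\lb\to\ii$ in \eqref{e3}) is a small additional check the paper leaves implicit, but the substance is identical.
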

\begin{proof}
Without loss of generality, assume that $C>0$. Then, given $\ep>0$ and $\dt>0$, condition $\|f\|_p\leq\displaystyle\frac{\ep^{1/p}\dt}{4C}$ implies, by Theorem \ref{t2}, that
\[
\mu\big\{M(\mc T,\bt)^*(f)>\dt\big\}\leq\ep,
\]
hence $M(\mc T,\bt)^*(f)\in\mc N(\ep,\dt)$. Therefore  $M(\mc T,\bt)^*: (\mc L^p, \|\cdot\|_p)\to (\mc L^0, t_\mu)$ is continuous at zero in $\mc L^p$, and the statement follows from Proposition \ref{p3}.
\end{proof}

\section{Almost uniform convergence of non-weighted averages in $\mc L^p(\Om,\mu)$ }
In this section we aim to establish a.u. convergence of non-weighted ergodic averages with continuous time in $\mc L^p$. For a.e. convergence of such averages, see, for example, \cite{sa}.
\vskip 5pt
We begin with the case of a {\it flow}, that is, when $t\to\ii$:
\begin{teo}\label{t3}
Given $1\leq p<\ii$ and $f\in\mc L^p$, the averages $M_t(\mc T)(f)$ converge a.u. to some $\wh f\in\mc L^p$ as $t\to\ii$.
\end{teo}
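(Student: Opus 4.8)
The plan is to combine the maximal ergodic inequality from Corollary \ref{c1} with a density argument. By Corollary \ref{c1}, the set $\mc L^p_c$ of all $f\in\mc L^p$ for which $M_t(\mc T)(f)$ converges a.u. as $t\to\ii$ is closed in $\mc L^p$. Hence it suffices to exhibit a dense subset of $\mc L^p$ on which a.u. convergence holds; passing to the closure then yields the claim for every $f\in\mc L^p$. The limit $\wh f$ lies in $\mc L^p$ because the unit ball of $\mc L^p$ is sequentially closed under a.u. convergence (Corollary \ref{c3}) and each $M_t(\mc T)$ is a contraction of $\mc L^p$, so the a.u. limit of $\{M_t(\mc T)(f/\|f\|_p)\}$ stays in the unit ball.

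For the dense subset I would use the classical ergodic decomposition associated with a strongly continuous semigroup. Passing from the flow $\{T_s\}$ to a single operator — for instance $T_1$, or better the time-one operator together with the infinitesimal generator $A$ — one writes $\mc L^p$ (approximately) as the closure of the direct sum of the ``invariant part'' $N=\{f: T_s f=f\ \forall s\}$ and the ``coboundary part,'' namely the closure of $\{g-T_s g: s>0,\ g\in\mc L^p\}$, or equivalently $\overline{\op{Range}(A)}$. On the invariant part $N$ we have $M_t(\mc T)(f)=f$ for all $t$, so convergence (even in norm, hence trivially a.u.) is immediate. On a coboundary $f=g-T_r g$ one computes $M_t(\mc T)(f)=\frac1t\int_0^t(T_s g-T_{s+r}g)\,ds$, which telescopes to something of the form $\frac1t\big(\int_0^r T_s g\,ds-\int_t^{t+r}T_s g\,ds\big)$; this is bounded in $\mc L^p$-norm by $\frac{2r}{t}\|g\|_p\to0$, so $M_t(\mc T)(f)\to0$ in $\mc L^p$-norm, hence in measure, hence (Proposition \ref{p2}) along a subsequence a.u.; combined with the maximal inequality and Proposition \ref{p3}'s Cauchy mechanism one upgrades this to full a.u. convergence to $0$. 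Thus a.u. convergence holds on the dense set $N\oplus\overline{\op{Range}(A)}$.

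The main obstacle — and the point needing the most care — is establishing the density: that $N\oplus\overline{\{g-T_s g\}}$ (or $N\oplus\overline{\op{Range}(A)}$) is actually dense in $\mc L^p$ for $1\le p<\ii$ over a $\sg$-finite (infinite) measure space, for a not-necessarily-positive Dunford–Schwartz flow. The standard route is via the mean ergodic theorem: for a strongly continuous contraction semigroup on the reflexive space $\mc L^p$ ($1<p<\ii$) one has the splitting $\mc L^p=N\oplus\overline{\op{Range}(A)}$ directly from the mean ergodic theorem, with $M_t(\mc T)(f)$ converging in norm to the projection onto $N$. For $p=1$ reflexivity fails, so one instead works on $\mc L^1\cap\mc L^2$ (dense in $\mc L^1$), applies the $p=2$ result there to get norm convergence of $M_t(\mc T)(f)$ in $\mc L^2$, controls the $\mc L^1$-norms via the contraction property, and then transfers a.u. convergence to all of $\mc L^1$ through Corollary \ref{c1}. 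An alternative, avoiding generator technalities, is to discretize: apply the known a.u. convergence theorem for powers of a single $DS$ operator together with the maximal inequality of Theorem \ref{t1}, and then interpolate between integer and real times using the strong continuity (Proposition \ref{p8}) and Proposition \ref{p2}, exactly as in the proof of Theorem \ref{t2}. Either way, once norm convergence plus the maximal inequality are in hand, the a.u. upgrade is routine: norm convergence gives convergence in measure, Proposition \ref{p2} gives an a.u.-convergent subsequence, and the maximal-inequality-driven Cauchy argument of Proposition \ref{p3} promotes this to a.u. convergence of the whole net.

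I would therefore organize the write-up as: (1) reduce to a dense subset via Corollary \ref{c1}; (2) prove norm convergence of $M_t(\mc T)(f)$ for $f$ in a dense set, using the mean ergodic theorem on $\mc L^2$ and the $N\oplus\overline{\op{Range}(A)}$ decomposition, then extending to $\mc L^p$ by contractivity; (3) convert norm convergence to a.u. convergence using Proposition \ref{p2} and the Cauchy argument underlying Proposition \ref{p3}, with the maximal inequality (\ref{eq4}) of Theorem \ref{t2} supplying the needed equicontinuity; (4) confirm $\wh f\in\mc L^p$ via Corollary \ref{c3}. The case $t\to0^+$ is deferred to a later result; here only $t\to\ii$ is treated.
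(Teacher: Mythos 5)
Your overall architecture (dense subset plus Corollary \ref{c1}, with Corollary \ref{c3} securing $\wh f\in\mc L^p$) is exactly the paper's, and the ``alternative'' you mention only in passing\,---\,discretize, invoke the known a.u.\ theorem for powers of a single Dunford--Schwartz operator, then interpolate between $[t]$ and $t$\,---\,is precisely the route the paper takes: for $h\in\mc L^1\cap\mc L^\ii$ one sets $g=\int_0^1T_s(h)\,ds$, checks the identity $M_n(\mc T)(h)=M_n(T_1)(g)$, applies \cite[Theorem 3.5]{cl19} to get a.u.\ convergence along integer times, and uses the estimate $\big\|M_t(\mc T)(h)-M_{[t]}(\mc T)(h)\big\|_\ii\leq2\big(1-[t]/t\big)\|h\|_\ii\to0$ to pass to real times. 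That route needs no mean ergodic theorem and no generator.

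The route you actually develop\,---\,the splitting into invariant part and coboundaries\,---\,has a genuine gap at the coboundary step. From $\|M_t(\mc T)(f)\|_p\leq\frac{2r}{t}\|g\|_p\to0$ you get convergence in measure, and Proposition \ref{p2} then yields a.u.\ convergence only along a \emph{subsequence} $t_k\to\ii$. Neither the maximal inequality nor the Cauchy mechanism of Proposition \ref{p3} promotes subsequential a.u.\ convergence of a fixed $f$ to a.u.\ convergence of the whole net: Proposition \ref{p3} requires a \emph{dense set} of elements for which the full net already converges a.u., and the maximal operator only bounds $\sup_{t>0}|M_t(\mc T)(f)|$; it does not control oscillation as $t\to\ii$. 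The repair is standard: restrict to coboundaries $f=g-T_rg$ with $g\in\mc L^p\cap\mc L^\ii$, so that the same telescoping identity gives $\|M_t(\mc T)(f)\|_\ii\leq\frac{2r}{t}\|g\|_\ii\to0$, i.e.\ uniform (hence a.u.) convergence of the entire net to $0$; such coboundaries together with the invariant part are still dense (for $1<p<\ii$ by the mean ergodic theorem on the reflexive space $\mc L^p$, for $p=1$ by routing through $\mc L^1\cap\mc L^2$ as you indicate, and one must take the fixed space of the whole semigroup, not of a single $T_r$). With these repairs your argument goes through, but it is longer than the paper's discretization argument and imports the mean ergodic theorem, which the paper avoids entirely.
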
 
\begin{proof}
Pick $h\in\mc L^1\cap\mc L^\ii$ and denote $g=\int_0^1T_s(h)ds$. As in the proof of Theorem \ref{t2}, one can see that
\[
M_n(\mc T)(h)=\sum_{k=0}^{n-1}T_1^k(g)=M_n(T_1)(g), \ \, n=1,2,\dots
\]
Therefore, by \cite[Teorem 3.5]{cl19}, we conclude that $M_n(\mc T)(h)\to\wh h$ a.u. for some $\wh h\in\mc L^p$. Thus, given $\ep>0$, there exists $G\su\Om$ such that $\mu(\Om\sm G)\leq\ep$ and
\[
\big\|\big(M_n(\mc T)(h)-\wh h\big)\chi_G\big\|_\ii\to 0.
\]

Next, given $t>1$, we have
\begin{equation*}
\begin{split}
\left\|M_t(\mc T)(h)-M_{[t]}(\mc T)(h)\right\|_\ii&=\left\|\frac1t\int_0^tT_s(h)ds-\frac1{[t]}\int_0^{[t]}T_s(h)ds\right\|_\ii\\
&=\left\|\frac1t\int_{[t]}^tT_s(h)ds-\left(\frac1t-\frac1{[t]}\right)\int_0^{[t]}T_s(h)ds\right\|_\ii\\
&\leq2\left(1-\frac{[t]}t\right)\|h\|_\ii\to 0\text{\ \, as\,\ }t\to\ii.
\end{split}
\end{equation*}

Since $\{M_{[t]}(\mc T)(h)\}_{t>1}=\{M_n(\mc T)(h)\}_{n=1}^\ii$ as sequences, it follows that
\begin{equation*}
\begin{split}
\big\|\big(M_t(\mc T)(h)-\wh h\big)\chi_G\big\|_\ii&=\big\|M_t(\mc T)(h)-M_{[t]}(\mc T)(h)\big\|_\ii\\
&+\big\|\big(M_{[t]}(\mc T)(h)-\wh h\big)\chi_G\big\|_\ii\to 0\text{\, \ as\,\ }t\to\ii,
\end{split} 
\end{equation*}
and we conclude $M_t(\mc T)(h)\to\wh h$ a.u. for all $h\in\mc L^1\cap\mc L^\ii$. 

Thus, since the set $\mc L^1\cap\mc L^\ii$ is dense in $\mc L^p$, an application of Corollary \ref{c1} yields that for every $f\in\mc L^p$ there exists $\wh f\in\mc L^0$ such that $M_t(\mc T)(f)\to\wh f$ a.u. Then,  there is a sequence $t_n\to\ii$ for which $M_{t_n}(\mc T)(f)\to\wh f$ a.u. as $n\to\ii$. Hence, as the sequence $\{M_{t_n}(\mc T)(f)\}_{n=1}^\ii$ is bounded in $\mc L^p$, Corollary \ref{c3} implies that $\wh f\in\mc L^p$.
\end{proof}
\vskip 5pt
Next, we shall establish a.u. convergence in the {\it local ergodic theorem}, that is, when $t\to0^+$:
\begin{teo}\label{t4}
Given $1\leq p<\ii$ and $f\in\mc L^p$, the averages $M_t(\mc T)(f)$ converge to $f$ as $t\to0$.
\end{teo}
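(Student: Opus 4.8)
The plan is to prove the local ergodic theorem $M_t(\mc T)(f)\to f$ as $t\to0^+$ in two stages: first establish it on a dense, well-behaved subset of $\mc L^p$, then bootstrap to all of $\mc L^p$ via the closedness result already in hand. The natural dense set is $\mc L^1\cap\mc L^\ii$, or even better a core for the $\mc L^1$-generator of the semigroup; in any case it suffices to produce \emph{some} dense subset on which a.u.\ convergence to $f$ holds. For this, I would first prove norm convergence $\|M_t(\mc T)(h)-h\|_p\to0$ as $t\to0^+$ for $h\in\mc L^p$, which is immediate from strong continuity (Proposition~\ref{p8}): indeed $\|M_t(\mc T)(h)-h\|_p=\big\|\tfrac1t\int_0^t(T_s(h)-h)\,ds\big\|_p\leq\tfrac1t\int_0^t\|T_s(h)-h\|_p\,ds$, and the integrand tends to $0$ as $s\to0^+$, so the average does too. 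This gives convergence in $\mc L^p$, hence in measure, for \emph{every} $f\in\mc L^p$ — but not yet a.u.

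To upgrade to a.u.\ convergence on a dense set, I would exploit the semigroup structure to reduce, as in the proofs of Theorems~\ref{t2} and \ref{t3}, to a \emph{discrete} local ergodic statement. Concretely, for $h\in\mc L^1\cap\mc L^\ii$ write $M_{1/m}(\mc T)(h)=\tfrac1m\int_0^{1/m}T_s(h)\,ds$ and relate the dyadic-type sequence $\{M_{1/2^m}(\mc T)(h)\}_m$ (or $\{M_{1/m}(\mc T)(h)\}_m$) to iterates; one can then invoke a discrete local ergodic theorem together with the maximal inequality of Theorem~\ref{t2} to get a.u.\ convergence along this sequence, and control the gaps between consecutive parameters by an $\mc L^\ii$-estimate of the type $\|M_t(\mc T)(h)-M_{1/m}(\mc T)(h)\|_\ii\leq 2\big(1-\tfrac{mt}{\cdots}\big)\|h\|_\ii$ analogous to the one appearing in the proof of Theorem~\ref{t3}. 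Combining the a.u.\ limit along the sequence with the vanishing of the gaps yields $M_t(\mc T)(h)\to h$ a.u.\ as $t\to0^+$ for all $h\in\mc L^1\cap\mc L^\ii$ (the a.u.\ limit must be $h$ because it already equals $h$ in measure by the norm estimate above).

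The final step is routine: the set of $f\in\mc L^p$ for which $M_t(\mc T)(f)$ converges a.u.\ as $t\to0^+$ is closed in $\mc L^p$ by Corollary~\ref{c1}, and it contains the dense subset $\mc L^1\cap\mc L^\ii$, so it is all of $\mc L^p$. Thus $M_t(\mc T)(f)$ converges a.u.\ as $t\to0^+$ to some $g\in\mc L^0$; since $M_t(\mc T)(f)\to f$ in measure by the norm estimate, and a.u.\ convergence implies convergence in measure, uniqueness of limits in measure forces $g=f$. (That $f\in\mc L^p$ rather than merely $\mc L^0$ is automatic here since the limit is $f$ itself.)

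The main obstacle I anticipate is the middle step — establishing a.u.\ convergence on the dense set. Norm convergence and the closedness/density argument are soft; the crux is getting an \emph{almost uniform} (not merely a.e.) local ergodic theorem for the discretized sequence, and then the interpolation between continuous $t$ and the discrete parameters. If a clean discrete local ergodic theorem with a.u.\ convergence is not directly citable, one would instead argue as in Proposition~\ref{p2}: the net $\{M_t(\mc T)(h)\}$ is, via the maximal inequality, suitably controlled, and one extracts a.u.\ convergence along a sequence and then propagates it using the uniform gap estimate together with the completeness of $\mc L^0$ under a.u.\ convergence (Proposition~\ref{p0}). Handling the $s\to0^+$ regularity of $T_s$ on $\mc L^\ii$ carefully — since strong continuity is only assumed in $\mc L^1$ — is where one must be most attentive.
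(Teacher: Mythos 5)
Your overall architecture\,---\,norm convergence of $M_t(\mc T)(f)$ to $f$ for every $f\in\mc L^p$, a.u.\ convergence on a dense set, closedness of the a.u.-convergence set via Corollary~\ref{c1}, and identification of the limit through convergence in measure\,---\,is exactly the paper's, and the first and last of these steps are carried out correctly. The gap is the step you yourself flag as the crux: neither of the two routes you sketch for obtaining a.u.\ convergence on a dense set actually works. There is no ``discrete local ergodic theorem'' to reduce to: the identity $M_n(\mc T)(h)=M_n(T_1)\big(\int_0^1T_s(h)\,ds\big)$ used for $t\to\ii$ has no analogue for $t\to0^+$, since $M_{1/m}(\mc T)(h)$ involves the operators $T_s$ with $s\leq1/m$ and cannot be written as a Ces\`aro average of powers of one fixed operator; a discrete ergodic theorem for $|T_{1/m}|$ concerns $n\to\ii$ with $m$ fixed and says nothing about $t\to0$. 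Your fallback fails quantitatively: Proposition~\ref{p2} extracts only a \emph{subsequence} $\{1/n_k\}$ along which $M_{1/n_k}(\mc T)(h)\to h$ a.u., while the gap estimate $\|M_t(\mc T)(h)-M_{[1/t]^{-1}}(\mc T)(h)\|_\ii\leq2\big(1-[1/t]\,t\big)\|h\|_\ii$ only controls the distance to the \emph{nearest} reciprocal integer; for $t\in[1/n_{k+1},1/n_k]$ the corresponding bound $2(1-n_kt)\|h\|_\ii$ can be as large as $2(1-n_k/n_{k+1})\|h\|_\ii$, which need not tend to $0$ along an arbitrary subsequence. So you need a.u.\ convergence along the full sequence $\{1/n\}$ on some dense set, and nothing in your sketch produces it.

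The paper's device (Lemma~\ref{l1}) fills exactly this hole, and it is elementary: take the dense set of functions of the form $f=M_{t_0}(\mc T)(g)$ with $t_0>0$ and $g\in\mc L^1\cap\mc L^\ii$ (dense by your own norm-convergence step). For such $f$ and $0<s\leq t<t_0$, a telescoping computation gives
\[
\|T_s(f)-f\|_\ii=\frac1{t_0}\Big\|\int_{t_0}^{t_0+s}T_u(g)\,du-\int_0^{s}T_u(g)\,du\Big\|_\ii\leq\frac{2s}{t_0}\,\|g\|_\ii,
\]
hence $\|M_t(\mc T)(f)-f\|_\ii\leq\frac{2t}{t_0}\|g\|_\ii\to0$ as $t\to0$, i.e.\ \emph{uniform} convergence of the whole net\,---\,no maximal inequality, no discretization, and no continuity of $T_s$ on $\mc L^\ii$, only the contraction property. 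From there your closedness-plus-identification argument goes through. Note that your closing worry about $s\to0^+$ regularity of $T_s$ on $\mc L^\ii$ is well placed, and this lemma is precisely how the paper avoids ever needing it.
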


To prove Theorem \ref{t4}, we will need a local mean ergodic theorem. Let us begin with a lemma:

\begin{lm}\label{l1}
If $t_0>0$, $g\in\mc L^p$, and $f=M_{t_0}(g)$, then $\big\|M_t(\mc T)(f)-f\big\|_p\to 0$ as $t\to0$. In addition, if $g\in\mc L^1\cap\mc L^\ii$, then $\big\|M_t(\mc T)(f)-f\big\|_\ii\to 0$ as $t\to0$.
\end{lm}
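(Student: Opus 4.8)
The plan is to run the classical argument for the local mean ergodic theorem: show that for this particular $f$ the orbit $s\mapsto T_s(f)$ is norm-continuous at $s=0$, with an explicit rate, and then average; all Bochner integrals below are well defined in $\mc L^p$ by Proposition \ref{p1}. The first step is to compute $T_h(f)-f$ for $h>0$. Writing $f=M_{t_0}(\mc T)(g)=\frac1{t_0}\int_0^{t_0}T_u(g)\,du$ and using that the bounded operator $T_h$ commutes with the Bochner integral together with the semigroup law $T_hT_u=T_{h+u}$, a change of variables $v=h+u$ gives
\[
T_h(f)-f=\frac1{t_0}\int_h^{h+t_0}T_v(g)\,dv-\frac1{t_0}\int_0^{t_0}T_v(g)\,dv=\frac1{t_0}\left(\int_{t_0}^{t_0+h}T_v(g)\,dv-\int_0^hT_v(g)\,dv\right).
\]
Since each $T_v$ contracts $\mc L^p$, i.e. $\|T_v(g)\|_p\le\|g\|_p$, the triangle inequality for Bochner integrals yields $\|T_h(f)-f\|_p\le\frac{2h}{t_0}\|g\|_p$ for all $h>0$.

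Now $M_t(\mc T)(f)=\frac1t\int_0^tT_s(f)\,ds$ while $\frac1t\int_0^tf\,ds=f$, so by linearity of the Bochner integral $M_t(\mc T)(f)-f=\frac1t\int_0^t\big(T_s(f)-f\big)\,ds$, and therefore
\[
\big\|M_t(\mc T)(f)-f\big\|_p\le\frac1t\int_0^t\big\|T_s(f)-f\big\|_p\,ds\le\frac1t\int_0^t\frac{2s}{t_0}\|g\|_p\,ds=\frac{t}{t_0}\|g\|_p,
\]
which tends to $0$ as $t\to0$; this proves the first assertion. For the second, assume $g\in\mc L^1\cap\mc L^\ii$. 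By Theorem \ref{t0} each $T_v$ also contracts $\mc L^\ii$, so $\|T_v(g)\|_\ii\le\|g\|_\ii$, and repeating the two estimates above with $\|\cdot\|_\ii$ in place of $\|\cdot\|_p$ gives $\|T_h(f)-f\|_\ii\le\frac{2h}{t_0}\|g\|_\ii$ and hence $\|M_t(\mc T)(f)-f\|_\ii\le\frac t{t_0}\|g\|_\ii\to0$ as $t\to0$.

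The one technical wrinkle, which I would flag as the main obstacle although it is routine, concerns the $\mc L^\ii$-estimate: the integrals $\int_{t_0}^{t_0+h}T_v(g)\,dv$ and $\int_0^hT_v(g)\,dv$ are a priori Bochner integrals in $\mc L^1$ (or $\mc L^p$), not in $\mc L^\ii$, because $v\mapsto T_v(g)$ need not be strongly measurable as an $\mc L^\ii$-valued map, so the bound $\big\|\int_a^bT_v(g)\,dv\big\|_\ii\le(b-a)\|g\|_\ii$ is not literally an instance of the triangle inequality in $\mc L^\ii$. One obtains it by duality: for an integrable simple function $\varphi$ with $\|\varphi\|_1\le1$ the functional $h\mapsto\int_\Om h\varphi\,d\mu$ is bounded on $\mc L^1$, hence commutes with the Bochner integral, so $\big|\int_\Om\big(\int_a^bT_v(g)\,dv\big)\varphi\,d\mu\big|=\big|\int_a^b\big(\int_\Om T_v(g)\varphi\,d\mu\big)\,dv\big|\le(b-a)\|g\|_\ii$, and since $\mu$ is $\sg$-finite, taking the supremum over such $\varphi$ recovers $\big\|\int_a^bT_v(g)\,dv\big\|_\ii$ and shows this element lies in $\mc L^\ii$. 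The same remark yields $M_t(\mc T)(f)-f\in\mc L^\ii$ with the bound claimed above. All remaining points are elementary.
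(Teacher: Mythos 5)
Your proof is correct and follows essentially the same route as the paper: write $T_s(f)-f$ as $\frac1{t_0}\bigl(\int_{t_0}^{t_0+s}T_v(g)\,dv-\int_0^sT_v(g)\,dv\bigr)$ via the semigroup law, bound it by $\frac{2s}{t_0}\|g\|_p$, and average. Your extra care about interpreting the $\mc L^\ii$-bound by duality is a point the paper glosses over, but it does not change the argument.
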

\begin{proof}
Let $0<t<t_0$. Then we have
\begin{equation*}
\begin{split}
\big\|M_t(\mc T)(f)&-f\big\|_p=\left\|\frac1t\int_0^t(T_s(f)-f)ds\right\|_p\leq\big\|T_s(f)-f\big\|_p\\
&=\frac1{t_0}\left\|\int_0^{t_0}T_{s+t}(g)ds-\int_0^{t_0}T_s(g)ds\right\|_p\\
&=\frac1{t_0}\left\|\int_t^{t_0+t}T_s(g)ds-\int_0^{t_0}T_s(g)ds\right\|_p\\
&=\frac1{t_0}\left\|\int_t^{t_0}T_s(g)ds+\int_{t_0}^{t+t_0}T_s(g)ds-\int_0^tT_s(g)ds-\int_t^{t_0}T_s(g)ds\right\|_p\\
&=\frac1{t_0}\left\|\int_{t_0}^{t+t_0}T_s(g)ds-\int_0^tT_s(g)ds\right\|_p\leq\frac{2t}{t_0}\|g\|_p\to 0\text{\, \ as\ \,}t\to0.
\end{split}
\end{equation*}

Proof of the second statement is carried out exactly the same way by simply replacing $L^1$-\,norm by $L^\ii$-\,norm.
\end{proof}

Here is the local mean ergodic theorem in $\mc L^p$:
\begin{teo}\label{t5}
If $1\leq p<\ii$ and $f\in\mc L^p$, then $\big\|M_t(\mc T)(f)-f\big\|_p\to 0$ as $t\to0$.
\end{teo}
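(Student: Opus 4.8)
The plan is to prove Theorem \ref{t5} by the standard two-step density argument, using Lemma \ref{l1} for a dense subspace and a maximal-inequality-type estimate to pass to the limit. First, I would observe that the set of functions of the form $f = M_{t_0}(\mc T)(g)$ with $t_0 > 0$ and $g \in \mc L^p$ spans a linear subspace $\mc D \su \mc L^p$ on which $\|M_t(\mc T)(f) - f\|_p \to 0$ as $t \to 0$; indeed Lemma \ref{l1} gives this for a single such $f$, and the convergence is preserved under finite linear combinations. So it suffices to show that $\mc D$ is dense in $\mc L^p$, together with a uniform bound allowing the approximation.

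For the uniform bound, note that each $M_t(\mc T)$ is a contraction of $\mc L^p$: by Proposition \ref{p1} and the integral form of the Minkowski inequality,
\[
\|M_t(\mc T)(f)\|_p \leq \frac{1}{t}\int_0^t \|T_s(f)\|_p\, ds \leq \|f\|_p,
\]
since each $T_s$ contracts $\mc L^p$. Hence, if $f \in \mc L^p$, $\ep > 0$, and $h \in \mc D$ with $\|f - h\|_p \leq \ep/3$, then
\[
\|M_t(\mc T)(f) - f\|_p \leq \|M_t(\mc T)(f - h)\|_p + \|M_t(\mc T)(h) - h\|_p + \|h - f\|_p \leq \frac{2\ep}{3} + \|M_t(\mc T)(h) - h\|_p,
\]
and the middle term tends to $0$ as $t \to 0$ by Lemma \ref{l1} and linearity. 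Thus $\limsup_{t\to0}\|M_t(\mc T)(f) - f\|_p \leq 2\ep/3$ for every $\ep > 0$, which gives the claim.

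The only remaining point — and the step I expect to require the most care — is the density of $\mc D$ in $\mc L^p$. I would argue that $M_{t_0}(\mc T)(g) \to g$ in $\mc L^p$ as $t_0 \to 0^+$ for every $g$ in the dense subspace $\mc L^1 \cap \mc L^\ii \su \mc L^p$; this follows from $M_{t_0}(\mc T)(g) - g = \frac{1}{t_0}\int_0^{t_0}(T_s(g) - g)\,ds$ together with the strong continuity of $\mc T$ in $\mc L^p$ (Proposition \ref{p8}), which forces $\|T_s(g) - g\|_p \to 0$ as $s \to 0$, so the average of this quantity over $[0,t_0]$ also tends to $0$. Since each $M_{t_0}(\mc T)(g) \in \mc D$ and approximates $g$ arbitrarily well, and $\mc L^1 \cap \mc L^\ii$ is dense in $\mc L^p$, the subspace $\mc D$ is dense in $\mc L^p$. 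Combining this with the contraction estimate above completes the proof; the subtlety is just making sure the two density reductions (first to $\mc L^1 \cap \mc L^\ii$, then within it via the averaging operators) are chained correctly, but no new ingredient beyond Proposition \ref{p8} and Lemma \ref{l1} is needed.
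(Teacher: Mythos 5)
Your proof is correct and follows essentially the same route as the paper: approximate $f$ in $\mc L^p$ by averages $M_{t_0}(\mc T)(g)$, apply Lemma \ref{l1} to these, and pass to the limit via the $\mc L^p$-contractivity of $M_t(\mc T)$. The only (harmless) redundancy is your intermediate reduction to $\mc L^1\cap\mc L^\ii$: Proposition \ref{p8} already gives $\|T_s(g)-g\|_p\to0$ for every $g\in\mc L^p$, so the paper simply takes the approximants $f_n=M_{t_n}(\mc T)(f)$ for the given $f$ itself.
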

\begin{proof}
Since, by Proposition \ref{p8}, the semigroup $\{T_s\}_{t\ge 0}$ is strongly continuous in $\mc L^p$, it follows that
\[
\big\|T_s(f)-f\|_p\to 0\text{\, \ as\,\ }s\to0.
\]
Therefore, for every $n\in\mbb N$ there exists $t_n>0$ such that
\[
\big\|T_s(f)-f\big\|_p<\frac1n\ \ \,\forall\, \ 0<s\leq t_0
\]
hence, denoting $f_n=M_{t_n}(f)$, we have
\[
\|f_n-f\|_p=\frac1{t_n}\left\|\int_0^{t_n}(T_s(f)-f)ds\right\|_p<\frac1n.
\]
Fix $\ep>0$ and choose $n_0$ to be such that
\[
\|f_{n_0}-f\|_p<\frac\ep4.
\]
Further, by Lemma \ref{l1}, 
\[
\big\|M_t(\mc T)(f_{n_0})-f_{n_0}\big\|_p\to 0\text{\, \ as\,\ }t\to0,
\]
thus, there exists $t_0>0$ such that
\[
\big\|M_t(\mc T)(f_{n_0})-f_{n_0}\big\|_p<\frac\ep2 \ \ \,\forall\, \ 0<t<t_0. 
\]
Now, we have
\begin{equation*}
\begin{split}
\big\|M_t(\mc T)(f)-f\big\|_p&\leq\big\|M_t(\mc T)(f)-M_t(\mc T)(f_{n_0})\big\|_p+\big\|M_t(\mc T)(f_{n_0})-f_{n_0}\big\|_p\\
&+\|f_{n_0}-f\|_p\leq2\|f_{n_0}-f\|_p+\frac\ep2<\ep
\end{split}
\end{equation*}
whenver $0<t<t_0$, thus completing the argument.
\end{proof}

\begin{proof}[Proof of Theorem \ref{t4}] 
Let a sequence of real numbers $t_n\to 0$. Then, by Theorem \ref{t5}, the set
\[
\mc D=\big\{M_{t_n}(\mc T)(h): \,n\in\mbb N, \, h\in\mc L^1\cap\mc L^\ii\big\}
\]
is dense in $\mc L^1\cap\mc L^\ii$, hence in $\mc L^p$. Besides, by Lemma \ref{l1}, 
\[
\big\|M_{t_n}(g)-g\|_\ii\to 0,\text{\ \,hence\,\ } M_{t_n}(\mc T)(g)\to g\text{ \ a.u.,\ } \ \forall\ g\in\mc D.
\]
Now, as Corollary \ref{c1} implies that the a.u.-convergence set of the averages $\{M_{t_n}(\mc T)\}$ is closed in $\mc L^p$, we conclude that the sequence $\{M_{t_n}(\mc T)(f)\}$ converges a.u. for all $f\in\mc L^p$. Further, given $f\in\mc L^p$, Theorem \ref{t5} yields that $M_{t_n}(\mc T)(f)\to f$ in $\mc L^p$, hence in measure. Therefore, by Proposition \ref{p2}, $M_{t_{n_k}}(\mc T)(f)\to f$ a.u. for some subsequence $\{t_{n_k}\}$, and we conclude that
\[
M_{t_n}(\mc T)(f)\to f\text{ \ a.u. as \ }t_n\to0\ \ \forall\ f\in\mc L^p.
\]

Next, assume that $f\in\mc L^1\cap\mc L^\ii$. Then we have $M_{1/n}(\mc T)(f)\to f$ a.u., in particular, 
\[
M_{[1/t]^{-1}}(f)\to f\text{ \,\ a.u. as\ \,} t\to0.
\] 
So, for a fix $\ep>0$, there exixts $G\su\Om$ such that $\mu(\Om\sm G)\leq\ep$ and
\[
\big\|\big(M_{[1/t]^{-1}}(f)-f\big)\chi_G\big\|_\ii\to 0\text{\, \ as\,\ }t\to0.
\] 
Further,
\begin{equation*}
\begin{split}
\left\|M_t(\mc T)(f)-M_{[1/t]^{-1}}(\mc T)(f)\right\|_\ii&=\left\|\frac1t\int_0^tT_s(f)ds-\left[\frac1t\right]\int_0^{[1/t]^{-1}}T_s(f)ds\right\|_\ii\\
&=\left\|\left(\frac1t-\left[\frac1t\right]\right)\int_0^tT_s(f)ds-\left[\frac1t\right]\int_t^{[1/t]^{-1}}T_s(f)ds\right\|_\ii\\
&\leq2\left(1-\left[\frac1t\right]\, t\right)\|f\|_\ii\to 0\text{ \ as\ }t\to0.\\
\end{split}
\end{equation*}
Therefore, we can write
\begin{equation*}
\begin{split}
\big\|\big(M_t(\mc T)(f)-f\big)\chi_G\big\|_\ii&\leq\big\|M_t(\mc T)(f)-M_{[1/t]^{-1}}(\mc T)(f)\big\|_\ii\\
&+\big\|\big(M_{[1/t]^{-1}}(f)-f\big)\chi_G\big\|_\ii\to 0\text{ \ as\ }t\to0.
\end{split}
\end{equation*}
Thus, the averages $\{M_t(\mc T)\}_{t>0}$ converge a.u. on the set $\mc L^1\cap\mc L^\ii$, which is dense in $\mc L^p$, implying, in view of Corollary \ref{c1}, that the net $\{M_t(\mc T)(f)\}_{t>0}$ converges a.u. as $t\to0$ for all $f\in\mc L^p$. This, in turn, entails that $M_t(\mc T)(f)\to f$ a.u. as $t\to0$, in the same way as in the first part of this proof, as we now know that, given $f\in\mc L^p$, there is a sequence $t_n\to0$ such that $\{M_{t_n}(\mc T)(f)\}$ converges a.u.
\end{proof}

\section{A.u. convergence of the weighted averages in $\mc L^p(\Om,\mu)$}
Let $\mbb C$ be the field of complex numbers, and let $\mbb C_1= \{z\in \mbb C: |z|=1\}$. A function $p:\mbb R_+\to \mbb C$ is called a {\it trigonomertic polynomial} if $p(s)=\sum\limits_{j=1}^nw_j\lb_j^s$, where $n\in \mbb N$, $\{w_j\}_1^n\su \mbb C$, and $\{\lb_j\}_1^n\su \mbb C_1$.

A Lebesgue measurable function $\bt: \mbb R_+\to \mbb C$ will be called {\it bounded Besicovitch} ({\it locally Besicovitch}) {\it function} if  $\|\bt\|_\ii<\ii$, and for every $\ep>0$ there is a trigonometric polynomial $p_\ep$ such that
\begin{equation}\label{e21}
\limsup_{t\to\ii}\frac 1t\int_0^t\big|\bt(s)-p_\ep(s)\big|ds<\ep
\end{equation}
(respectively,
\begin{equation}\label{e21z}
\limsup_{t\to0^+}\frac 1t\int_0^t\big|\bt(s)-p_\ep(s)\big|ds<\ep\, ).
\end{equation}

\vskip 5pt
 Let $\nu$ stand for Lebesgue measure in $\mbb C_1$ and consider the product measure space
\[
(\mbb C_1\otimes\Om, \nu\otimes\mu)=(\mbb C_1, \nu)\otimes(\Om, \mu).
\]
As in \cite[Lemma 4.7]{ccl}), we obtain the following.
\begin{lm}\label{l2}
Let $(X,\nu)$ and $(Y,\mu)$ be $\sg$-finite measure spaces. If a net $\{g_t\}_{t\ge 0}\su\mc L^0(X\otimes Y)$ is such that $g_t\to g$ a.u. in $X\otimes Y$ as $t\to\ii$ or $t\to0^+$, then $g_t(x,y)\to g(x,y)$ a.u. in $Y$ as\, $t\to\ii$ or $t\to0^+$ for almost all $x\in X$.
\end{lm}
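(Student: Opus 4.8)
The plan is to deduce the sectionwise conclusion by slicing, via Tonelli's/Fubini's theorem, the ``good sets'' that a.u.\ convergence on the product $X\otimes Y$ provides, in the spirit of \cite[Lemma 4.7]{ccl}. By definition of a.u.\ convergence of $\{g_t\}$ to $g$ in $\mc L^0(X\otimes Y)$, for each $n\in\mbb N$ one gets a set $G_n\su X\times Y$ with $(\nu\otimes\mu)\big((X\times Y)\sm G_n\big)\le 1/n$ and $\big\|(g-g_t)\chi_{G_n}\big\|_\ii\to0$ as $t\to\ii$ (resp.\ $t\to0^+$). Since $\big|(g-g_t)\chi_{G_1\cup\dots\cup G_n}\big|=\max_{k\le n}\big|(g-g_t)\chi_{G_k}\big|$ pointwise, replacing $G_n$ by $\bigcup_{k\le n}G_k$ affects neither property, so one may assume $G_1\su G_2\su\cdots$. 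For $x\in X$ let $G_n^x=\{y\in Y:(x,y)\in G_n\}$ denote the $x$-section.

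Two facts are then needed. \emph{First}, that the sections have small complement for a.e.\ $x$: by Tonelli, $\int_X\mu(Y\sm G_n^x)\,d\nu(x)=(\nu\otimes\mu)\big((X\times Y)\sm G_n\big)\le1/n$; since $G_n\uparrow$, the numbers $\mu(Y\sm G_n^x)$ decrease to $\varphi(x):=\mu\big(Y\sm\bigcup_nG_n^x\big)$ and are dominated by the $\nu$-integrable function $\mu(Y\sm G_1^x)$, so dominated convergence gives $\int_X\varphi\,d\nu=0$, whence $\varphi=0$ $\nu$-a.e.; thus off a $\nu$-null set $X_0$, for every $\ep>0$ there is $n$ with $\mu(Y\sm G_n^x)<\ep$. \emph{Second}, that uniform convergence descends to the sections: fixing $n,j\in\mbb N$ and $s_{n,j}$ with $\big\|(g-g_t)\chi_{G_n}\big\|_\ii\le1/j$ for $t\ge s_{n,j}$, for each such $t$ the set $\{(x,y)\in G_n:|g-g_t|>1/j\}$ is $(\nu\otimes\mu)$-null, so by Tonelli its $x$-section is $\mu$-null off a $\nu$-null set; bundling these over the countably many triples $(n,j,t)$ with $t$ confined to a countable cofinal set of parameters (see below) yields a single $\nu$-null set $X_1$ such that, for $x\notin X_1$ and every $n$, $\big\|\big(g(x,\cdot)-g_t(x,\cdot)\big)\chi_{G_n^x}\big\|_\ii\le1/j$ for all $t\ge s_{n,j}$, i.e.\ $\big\|\big(g(x,\cdot)-g_t(x,\cdot)\big)\chi_{G_n^x}\big\|_\ii\to0$ as $t\to\ii$. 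Combining the two: for $x\notin X_0\cup X_1$ and $\ep>0$, choosing $n$ with $\mu(Y\sm G_n^x)<\ep$ shows that $G_n^x$ witnesses $g_t(x,\cdot)\to g(x,\cdot)$ a.u.\ in $Y$. The case $t\to0^+$ is identical after replacing every ``$t\ge s$'' by ``$0<t\le s$''.

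The step that requires care, and the main obstacle, is the second one: $t$ ranges over a continuum, so the $\nu$-null exceptional set produced by Tonelli for a \emph{fixed} value of $t$ a priori depends on $t$, and an uncountable union of $\nu$-null sets need not be $\nu$-null. I would circumvent this by working with the monotone tail-suprema $R_n(s):=\sup_{t\ge s}\big\|(g-g_t)\chi_{G_n}\big\|_\ii$, which decrease to $0$ as $s\to\ii$: for every \emph{rational} $t\ge s$ one has $\big\|\big(g(x,\cdot)-g_t(x,\cdot)\big)\chi_{G_n^x}\big\|_\ii\le R_n(s)$ for a.e.\ $x$ (now a countable family of conditions), and passing from rational to arbitrary $t$ is immediate once $t\mapsto g_t$ is continuous in measure, which is the case for the ergodic averages to which this lemma is applied below. (Equivalently, one may carry the whole argument through the quantities $R_n(s)$ from the outset.)
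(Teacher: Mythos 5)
Your slicing strategy is the right one and is, as far as one can tell, the argument the paper intends: no proof is given in the text, only the pointer to the discrete-time statement \cite[Lemma 4.7]{ccl}, where the index set is countable and the difficulty you isolate does not arise. The two Tonelli steps are correctly set up (in the first, note that $\mu(Y\sm G_1^x)<\ii$ only for $\nu$-a.e.\ $x$, which is all that continuity from above needs, and $\int_X\varphi\,d\nu\le 1/n$ for every $n$ already forces $\varphi=0$ a.e.\ without dominated convergence), and restricting the second step to rational $t$ makes it rigorous.

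The one place that still needs sharpening is the final passage from rational to arbitrary $t$. If ``$t\mapsto g_t$ is continuous in measure'' is meant in $\mc L^0(X\otimes Y)$, then for each fixed irrational $t$ you get $g_{r_k}\to g_t$ in product measure along rationals $r_k\to t$, hence a.e.\ convergence along a subsequence, hence for $\nu$-a.e.\ $x$ the bound $\big\|\big(g(x,\cdot)-g_t(x,\cdot)\big)\chi_{G_n^x}\big\|_\ii\le R_n(s)$ transfers --- but this exceptional $\nu$-null set again depends on $t$, and you are back to an uncountable union of null sets, one level down. What closes the argument is the sectionwise hypothesis that for $\nu$-a.e.\ $x$ the net $t\mapsto g_t(x,\cdot)$ is itself continuous in measure in $\mc L^0(Y)$: for such an $x$ that is also good for the rational-index statement, every bound transfers from rationals to all $t$ by exactly the device the paper uses at the end of the proof of Theorem \ref{t2}. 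In the only application of the lemma (Lemma \ref{l3}) this holds for every $z$, since there $g_t(z,\cdot)=z\,h_t$ with $h_t=\frac1t\int_0^t\lb^sT_s(f)\,ds$ norm-continuous in $t$, so your proof does cover everything the lemma is used for. Your instinct that some such extra hypothesis is unavoidable is also correct: for a completely arbitrary uncountable net of equivalence classes the conclusion is not even representative-independent. For instance, with $X=Y=[0,1]$ and $g_t=\chi_{\{x=t-[t]\}}$, every $g_t$ is the zero class in $\mc L^0(X\otimes Y)$, so $g_t\to0$ a.u.\ trivially, yet with these representatives every $x\in[0,1)$ satisfies $g_t(x,\cdot)=\mb 1$ for an unbounded set of $t$. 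So the lemma as literally stated is too general, and the version you prove is the one that is actually needed.
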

Recall that a semigroup $\{T_t\}_{t\ge 0}\su DS$ is presumed to be strongly continuous in $\mc L^1=\mc L^1(\Om)$. Define
\[
f_z(\om)=\wt f(z,\om)\text{ \ whenever \ } \wt f\in\mc L^1(\mbb C_1\otimes\Om)+\mc L^\ii(\mbb C_1\otimes\Om),
\]
and note that if $\wt f\in\mc L^1(\mbb C_1\otimes\Om)$, then, by Fubini's theorem, $f_z\in\mc L^1=\mc L^1(\Om)$ for $\nu$-almost all $z\in\mbb C_1$.

Next, given $\lb \in \mbb C_1$, define
\[
T^{\lb}_s(\wt f)(z,\om)=T_s(f_{\lb^sz})(\om)\text{ \ for\ }s\ge0\text{ \ and \ }\wt f\in\mc L^1(\mbb C_1\otimes\Om)+\mc L^\ii(\mbb C_1\otimes\Om).
\]
Then it is easily verified that $\{T^{(\lb)}_s\}_{s\ge0}\su DS$ on  $\mc L^1(\mbb C_1\otimes\Om)+\mc L^\ii(\mbb C_1\otimes\Om)$ for each $\lb\in\mbb C_1$. Besides, given $r, s\ge0$, we have

\begin{equation*}
\begin{split}
T_{r+s}^{(\lb)}(\wt f)(z,\om)&=T_{r+s}(f_{\lb^{r+s}z})(\om)=T_r(T_s(f_{\lb^s\lb^tz}))(\om)=
T_r(T_s^{(\lb)}(f_{\lb^tz}))(\om)\\
&=T^{(\lb)}_s(T_r(f_{\lb^rz}))(\om)=T_r^{(\lb)}T_s^{(\lb)}(\wt f)(z,\om),
\end{split}
\end{equation*}
that is, $\{T_t^{(\lb)}\}_{t\ge 0}$ is a semigroup.

\begin{pro}\label{p4}
The semigroup  $\{T_s^{(\lb)}\}_{s\ge 0}$ is strongly continuous in $\mc L^1(\mbb C_1\otimes\Om)$ for each $\lb\in\mbb C_1$.
\end{pro}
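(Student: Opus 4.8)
The plan is to reduce strong continuity of $\{T_s^{(\lb)}\}_{s\ge0}$ in $\mc L^1(\mbb C_1\otimes\Om)$ to the strong continuity of $\{T_s\}_{s\ge0}$ in $\mc L^1(\Om)$ via Fubini's theorem, handling the two sources of $s$-dependence in the formula $T_s^{(\lb)}(\wt f)(z,\om)=T_s(f_{\lb^sz})(\om)$ — the action of $T_s$ on the $\Om$-variable, and the rotation $z\mapsto\lb^sz$ on the $\mbb C_1$-variable — separately. Throughout, I will fix $\lb\in\mbb C_1$ and suppress it where convenient.

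First I would reduce to a dense class. Since $\mc L^1(\mbb C_1)\otimes\mc L^1(\Om)$ (finite linear combinations of functions of the form $\phi(z)g(\om)$ with $\phi\in\mc L^1(\mbb C_1)$, $g\in\mc L^1(\Om)$) is dense in $\mc L^1(\mbb C_1\otimes\Om)$, and each $T_s^{(\lb)}$ is an $\mc L^1$-contraction (it is in $DS$), a routine $\ep/3$-argument shows it suffices to prove $\|T_s^{(\lb)}(\wt f)-T_{s_0}^{(\lb)}(\wt f)\|_1\to0$ as $s\to s_0$ for $\wt f$ in this dense class, indeed for a single product $\wt f(z,\om)=\phi(z)g(\om)$. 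For such $\wt f$ one has $f_{\zeta}=\phi(\zeta)g$, hence
\[
T_s^{(\lb)}(\wt f)(z,\om)=\phi(\lb^sz)\,T_s(g)(\om),
\]
so by Fubini
\[
\big\|T_s^{(\lb)}(\wt f)-T_{s_0}^{(\lb)}(\wt f)\big\|_1=\int_{\mbb C_1}\!\!\int_\Om\big|\phi(\lb^sz)T_s(g)(\om)-\phi(\lb^{s_0}z)T_{s_0}(g)(\om)\big|\,d\mu(\om)\,d\nu(z).
\]
I would split this using the triangle inequality as $\le \int_{\mbb C_1}|\phi(\lb^sz)|\,d\nu(z)\cdot\|T_s(g)-T_{s_0}(g)\|_1 + \|T_{s_0}(g)\|_1\cdot\int_{\mbb C_1}|\phi(\lb^sz)-\phi(\lb^{s_0}z)|\,d\nu(z)$. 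By rotation-invariance of $\nu$ on $\mbb C_1$, $\int_{\mbb C_1}|\phi(\lb^sz)|d\nu(z)=\|\phi\|_1$, so the first term is $\|\phi\|_1\|T_s(g)-T_{s_0}(g)\|_1$, which tends to $0$ as $s\to s_0$ by the assumed strong continuity of $\{T_s\}$ in $\mc L^1(\Om)$. The second term is $\|g\|_1$ times $\int_{\mbb C_1}|\phi(\lb^sz)-\phi(\lb^{s_0}z)|d\nu(z)$; substituting $w=\lb^{s_0}z$ and writing $\lb^{s-s_0}=e^{i\theta(s-s_0)}$ for the appropriate real $\theta$, this becomes the $\mc L^1$-modulus of continuity of $\phi$ under rotation by $\lb^{s-s_0}$, which tends to $0$ as $s\to s_0$ by continuity of rotations on $\mc L^1(\mbb C_1)$ (translation is continuous in $\mc L^1$ of the circle).

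The main obstacle — really the only subtle point — is being careful about the rotation-continuity step: one must know that $s\mapsto\lb^s$ is continuous into $\mbb C_1$ and that the rotation action is strongly continuous on $\mc L^1(\mbb C_1,\nu)$; the latter is the standard fact that translation is continuous in the $L^1$-norm on a compact group, proved by approximating $\phi$ by a continuous function. If $\lb=1$ this term vanishes identically and nothing is needed. I would also note that the semigroup and $DS$ properties of $\{T_s^{(\lb)}\}$ have already been recorded in the excerpt, so the only thing left to verify is precisely this strong-continuity estimate; the reduction to products and the Fubini split do all the work.
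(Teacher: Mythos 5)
Your proposal is correct and follows essentially the same route as the paper: both split the difference $T_s^{(\lb)}(\wt f)-T_{s_0}^{(\lb)}(\wt f)$ into a term controlled by the strong continuity of $\{T_s\}$ in $\mc L^1(\Om)$ and a term controlled by the $\mc L^1$-continuity of rotations of $\mbb C_1$, the latter handled by approximation with step functions in the $z$-variable (your elementary tensors $\phi(z)g(\om)$ versus the paper's $\mc L^1(\Om)$-valued simple functions $\sum h_i\chi_{[\lb_i,\lb_{i+1}]}(z)$ amount to the same approximation). The only organizational differences are that you work at a general point $s_0$ directly and apply the density reduction to the whole expression, whereas the paper first uses the semigroup and contraction properties to reduce to continuity at $s_0=0$ and approximates only in the rotation term; neither difference affects correctness.
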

\begin{proof}
Clearly, it suffices to show that, given $\wt f\in\mc L^1(\mbb C_1\otimes\Om)$ and $\lb\in\mbb C_1$, we have
\[
\big\|T_{s_n}^{(\lb)}(\wt f)-T_s^{(\lb)}(\wt f)\big\|_{\mc L^1(\mbb C_1\otimes\Om)}\to0\text{\, \ as\ \,}s_n\searrow s\text{\, \ and as\, \ }s\nearrow s.
\]
If $s_n\searrow s$ ($s_n\nearrow s$), then, as
\[
\big\|T_{s_n}^{(\lb)}(\wt f)-T_s^{(\lb)}(\wt f)\big\|_{\mc L^1(\mbb C_1\otimes\Om)}\leq\big\|T_{s_n-s}^{(\lb)}(\wt f)-\wt f\big\|_{\mc L^1(\mbb C_1\otimes\Om)}
\]
(respectively, as
\[
\big\|T_{s_n}^{(\lb)}(\wt f)-T_s^{(\lb)}(\wt f)\big\|_{\mc L^1(\mbb C_1\otimes\Om)}\leq\big\|T_{s-s_n}^{(\lb)}(\wt f)-\wt f\big\|_{\mc L^1(\mbb C_1\otimes\Om)}),
\]
we conclude that it is sufficient to verify that
\[
\big\|T_{s_n}^{(\lb)}(\wt f)-\wt f\big\|_{\mc L^1(\mbb C_1\otimes\Om)}=\int_{\mbb C_1}\big\|T_{s_n}(\wt f(\lb^{s_n}z,\cdot))-\wt f(z,\cdot)\big\|_1d\nu(z)\to0\text{\ \, as\ \ }s_n\to0^+.
\]

We have
\begin{equation*}
\begin{split}
\big\|T_{s_n}(\wt f(\lb^{s_n}z,\cdot))&-\wt f(z,\cdot)\big\|_1\leq\big\|T_{s_n}(\wt f(\lb^{s_n}z,\cdot))-T_{s_n}(\wt f(z,\cdot))\big\|_1\\
&+\big\|T_{s_n}(\wt f(z,\cdot))-\wt f(z,\cdot)\big\|_1\\
&\leq\big\|\wt f(\lb^{s_n}z,\cdot)-\wt f(z,\cdot)\big\|_1+\big\|T_{s_n}(\wt f(z,\cdot))-\wt f(z,\cdot)\big\|_1.
\end{split}
\end{equation*}
Since $\{T_s\}_{s\ge0}$ is strongly continuous in $\mc L^1$, it follows that
\[
\big\|T_{s_n}(\wt f(z,\cdot))-\wt f(z,\cdot)\big\|_1\text{ \ for $\nu$-almost all \ }z\in\mbb C_1,
\]
which, together with $\big\|T_{s_n}(\wt f(z,\cdot))-\wt f(z,\cdot)\big\|_1\leq2\|\wt f(z,\cdot)\|_1$\, $\nu$-a.e. on $\mbb C_1$, implies that
\[\
\int_{\mbb C_1}\big\|T_{s_n}(\wt f(z,\cdot))-\wt f(z,\cdot)\big\|_1d\nu(z)\to0\text{ \ as\ }s_n\to0^+.
\]

Thus, it remains to verify that
\begin{equation}\label{e5}
\int_{\mbb C_1}\big\|\wt f(\lb^{s_n}z,\cdot)-\wt f(z,\cdot)\big\|_1d\nu(z)\to0\text{ \ as\ }s_n\to0^+.
\end{equation}
To that end, let
\begin{equation}\label{e6}
\wt h(z,\om)=\sum_{i=1}^mh_i\chi_{[\lb_1,\lb_{i+1}]}(z),\, \ z\in\mbb C_1, \ h_i\in\mc L^1\text{ \ for\,\ }i=1,\dots,m
\end{equation}
be a simple Bochner-measurable function, where $\{\lb_i\}_{i=1}^{m+1}$, with $\lb_{m+1}=\lb_1$, is a {\it partition} of $\mbb C_1$. Then, given $s\in\mbb R$, we have
\[
\wt h(\lb^sz,\om)=\sum_{i=1}^mh_i\chi_{[\lb^s\lb_1,\lb^s\lb_{i+1}]}(z),
\]
where $\{\lb^s\lb_i\}_{i=1}^{m+1}$  is another partition of $\mbb C_1$. Since $\lb^{s_n}\lb_i\to\lb_i$ as $s_n\to0$ for each $i$, it follows that
\[
\int_{\mbb C_1}\big\|\wt h(\lb^{s_n}z,\cdot)-\wt h(z,\cdot)\big\|_1d\nu(z)\leq\sum_{i=1}^m\big|\chi_{[\lb^{s_n}\lb_i,\lb^{s_n}\lb_{i+1}]}-\chi_{[\lb_i,\lb_{i+1}]}\big|\,\|h_i\|_1\to0
\]
as $s_n\to0^+$.

Now, as $\wt f\in\mc L^1(\mbb C_1\otimes\Om)$, one can justify that there exists a sequence $\{\wt h_k(z,\om)\}$ of functions of the form (\ref{e6}) for which
\[
\int_{\mbb C_1}\big\|\wt f(z,\cdot)-\wt h_k(z,\cdot)\big\|_1d\nu(z)\to0\text{\, \ as\,\ }k\to\ii.
\]
Then we have
\begin{equation*}
\begin{split}
\int_{\mbb C_1}\big\|\wt f(\lb^{s_n}z,\cdot)&-\wt f(z,\cdot)\big\|_1d\nu(z)\leq\int_{\mbb C_1}\big\|\wt f(\lb^{s_n}z,\cdot)-\wt h_k(\lb^{s_n}z,\cdot)\big\|_1d\nu(z)\\
&+\int_{\mbb C_1}\big\|\wt h_k(\lb^{s_n}z,\cdot)-\wt h_k(z,\cdot)\big\|_1d\nu(z)\\
&+\int_{\mbb C_1}\big\|\wt h_k(z,\cdot)-\wt f(z,\cdot)\big\|_1d\nu(z)=2\int_{\mbb C_1}\big\|\wt h_k(z,\cdot)-\wt f(z,\cdot)\big\|_1d\nu(z)\\
&+\int_{\mbb C_1}\big\|\wt h_k(\lb^{s_n}z,\cdot)-\wt h_k(z,\cdot)\big\|_1d\nu(z)\to0\text{\, \ as\, \ } s_n\to0^+\ \ \forall \ k,
\end{split}
\end{equation*}
and (\ref{e5}) follows, completing the proof.
\end{proof}

\begin{lm}\label{l3}
Let a semigroup $\mc T=\{T_s\}_{s\ge 0}\su DS$ be strongly continuous in $\mc L^1$, and let $p(s)=\sum\limits_{j=1}^nw_j\lb_j^s$ be a trigonometric polynomial. If $1\leq p<\ii$, $f\in\mc L^p$, and
\begin{equation}\label{e7}
P_t(\mc T)(f)=\frac 1t\int_0^tp(s)T_s(f)ds,
\end{equation}
then
\begin{enumerate}[(i)]
\item
the net $P_t(\mc T)(f)$ converges a.u. as $t\to \ii$;
\item
the net $P_t(\mc T)(f)$ convergs a.u. to $p(0)f$ as $t\to0$.
\end{enumerate}
\end{lm}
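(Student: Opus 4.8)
The plan is to reduce to a single exponential weight and then transfer the statement, through a skew product over $\mbb C_1$, to the non-weighted continuous ergodic theorems of Section 4. By linearity of the Bochner integral, $P_t(\mc T)(f)=\sum_{j=1}^nw_j\,N_t^{(\lb_j)}(f)$, where $N_t^{(\lb)}(f):=\frac1t\int_0^t\lb^sT_s(f)\,ds\in\mc L^p$ by Proposition \ref{p1} applied to the bounded weight $\bt(s)=\lb^s$; and a finite sum of a.u.\ convergent nets is a.u.\ convergent, with limit the sum of the limits. So it suffices to prove that, for each fixed $\lb\in\mbb C_1$, the net $N_t^{(\lb)}(f)$ converges a.u.\ as $t\to\ii$ and converges a.u.\ \emph{to $f$} as $t\to0$; summing the latter over $j$ then gives the limit $\sum_jw_jf=p(0)f$ in part (ii).

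Fix $\lb\in\mbb C_1$ and set $\wt f(z,\om)=z\,f(\om)$, an element of $\mc L^p(\mbb C_1\otimes\Om)$ with $\|\wt f\|_{\mc L^p(\mbb C_1\otimes\Om)}=\nu(\mbb C_1)^{1/p}\|f\|_p<\ii$, whose $z$-fibers are $f_w=w\,f$ for every $w\in\mbb C_1$. By the definition of $T^{(\lb)}_s$ and linearity of $T_s$,
\[
T^{(\lb)}_s(\wt f)(z,\om)=T_s(f_{\lb^sz})(\om)=\lb^sz\,T_s(f)(\om),
\]
so that, evaluating the $\mc L^p(\mbb C_1\otimes\Om)$-valued Bochner integral fiberwise,
\[
M_t(\mc T^{(\lb)})(\wt f)(z,\om)=z\,N_t^{(\lb)}(f)(\om)\qquad(\nu\otimes\mu)\text{-a.e.}
\]
By the remark preceding Proposition \ref{p4}, $\mc T^{(\lb)}=\{T^{(\lb)}_s\}_{s\ge0}\su DS$ is a semigroup on $\mc L^1(\mbb C_1\otimes\Om)+\mc L^\ii(\mbb C_1\otimes\Om)$, and by Proposition \ref{p4} it is strongly continuous in $\mc L^1(\mbb C_1\otimes\Om)$; since $\mbb C_1\otimes\Om$ is $\sg$-finite (and infinite, as $\mu$ is infinite and $\nu(\mbb C_1)<\ii$), Theorems \ref{t3} and \ref{t4} apply there. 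Hence $M_t(\mc T^{(\lb)})(\wt f)$ converges a.u.\ in $\mc L^p(\mbb C_1\otimes\Om)$ as $t\to\ii$, and $M_t(\mc T^{(\lb)})(\wt f)\to\wt f$ a.u.\ as $t\to0$. Lemma \ref{l2} then gives, for $\nu$-a.e.\ $z$, that the section $z\,N_t^{(\lb)}(f)=M_t(\mc T^{(\lb)})(\wt f)(z,\cdot)$ converges a.u.\ in $\Om$ (to $z\,f$ when $t\to0$). Picking one such $z_0$ — a conull restriction, so a single $z_0$ serves for $\lb_1,\dots,\lb_n$ at once — and dividing by $z_0$, legitimate since $|z_0|=1$, yields the a.u.\ convergence of $N_t^{(\lb_j)}(f)$, with limit $f$ when $t\to0$. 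Summing over $j$ completes both (i) and (ii).

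I expect the only genuinely delicate point to be the a.e.\ fiberwise identity for $M_t(\mc T^{(\lb)})(\wt f)$, i.e.\ that restricting the $\mc L^p(\mbb C_1\otimes\Om)$-valued Bochner integral $\frac1t\int_0^t(\cdot)\,ds$ to a $\mbb C_1$-fiber agrees with the scalar Lebesgue integral of the fiberwise integrand. I would establish it by pairing with the indicators $\chi_{A\times B}$ of rectangles with $A\su\mbb C_1$ and $B\su\Om$ of finite measure (these are continuous linear functionals on $\mc L^p(\mbb C_1\otimes\Om)$, hence commute with the Bochner integral), evaluating the resulting iterated integral of $z\,\lb^sT_s(f)(\om)$ by the classical Fubini theorem, and invoking that two elements of $\mc L^p(\mbb C_1\otimes\Om)$ with equal integrals over every such rectangle coincide. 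Everything else is a bookkeeping assembly of Propositions \ref{p1} and \ref{p4}, Lemma \ref{l2}, and Theorems \ref{t3} and \ref{t4}.
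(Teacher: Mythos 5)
Your proposal is correct and follows essentially the same route as the paper: decompose $P_t(\mc T)(f)$ by linearity into the single-frequency averages, transfer each to the skew-product semigroup $\{T_s^{(\lb)}\}$ on $\mbb C_1\otimes\Om$ via $\wt f(z,\om)=zf(\om)$, apply Theorems \ref{t3} and \ref{t4} together with Lemma \ref{l2}, and divide by a nonzero fiber point $z_0$. The only difference is that you explicitly justify the fiberwise evaluation of the Bochner integral (a step the paper passes over silently), which is a welcome addition but not a change of method.
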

\begin{proof}

(i) Fix  $\lb \in \mbb C_1$. In view of Propositions \ref{p8} and \ref{p4}, $\{T_s^{(\lb)}\}_{s\ge 0}\su DS$ is a semigroup strongly continuous in $\mc L^p(\mbb C_1\otimes\Om)$. Then, by Theorem \ref{t3}, given $\wt f\in\mc L^p(\mbb C_1\otimes\Om)$, the net
\begin{equation}\label{e0}
\frac 1t\int_0^tT_s^{(\lb)}(\wt f)ds
\end{equation}
converges a.u. as $t\to \ii$. Therefore, by Lemma \ref{l2}, the net
\[
\frac 1t\int_0^tT_s^{(\lb)}(\wt f)(z,\om)ds=\frac 1t\int_0^tT_s(\wt f(\lb^sz,\om))ds
\]
converges a.u. as $t\to \ii$ \,$\nu$-a.e. on $\mbb C_1$. In particular, letting $\wt f(z,\om)=zf(\om)$, we conclude
that the net
\[
z\,\frac 1t\int_0^t\lb^sT_s(f)ds
\]
converges a.u. as $t\to\ii$ for some $0\neq z\in \mbb C_1$, implying that the net
\[
\frac 1t\int_0^t\lb^sT_s(x)ds
\]
converges a.u. as $t\to\ii$. Therefore, by linearity, the net $P_t(\mc T)(x)$ converges a.u. as $t\to\ii$.

(ii) Now, by Theorem \ref{t5}, given $\wt f\in\mc L^p(\mbb C_1\otimes\Om)$, it follows that the net (\ref{e0})
converges a.u. to $\wt f$ as $t\to0$. Then, letting $\wt f(z,\om)=zf(\om)$, we see as above that
\[
\frac 1t\int_0^t\lb^sT_s(f)ds\to f \text{ \ a.u.}
\]
as $t\to0$, and the result follows by linearity.
\end{proof}

Now we are ready to present the main results of this section.
\begin{teo}\label{t6}
Let a semigroup $\{T_s\}_{s\ge 0}\su DS$ be strongly continuous in $\mc L^1$, and let $\bt: \mbb R_+\to\mbb C$ be a bounded Besicovitch function with $\|\bt\|_\ii<C<\ii$. Then, given $1\leq p<\ii$ and $f\in\mc L^p$, the net $\{M_t(\mc T,\bt)(f)\}_{t>0}$ converges a.u.  as $t\to \ii$ to some $\wh f\in\mc L^p$.
\end{teo}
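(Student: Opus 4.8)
\emph{Proof proposal.} The plan is to mimic the architecture of the proof of Theorem \ref{t3}: first establish a.u. convergence of $\{M_t(\mc T,\bt)(h)\}_{t>0}$ as $t\to\ii$ for $h$ in the dense subalgebra $\mc L^1\cap\mc L^\ii$, then transfer it to all of $\mc L^p$ via Corollary \ref{c1} (closedness of the a.u.-convergence set), and finally recover $\wh f\in\mc L^p$ by means of Corollary \ref{c3} together with the uniform bound $\|M_t(\mc T,\bt)(f)\|_p\leq C\|f\|_p$.

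For the first step, fix $h\in\mc L^1\cap\mc L^\ii\su\mc L^p$ and $\eta>0$. For each $k\in\mbb N$ use the bounded Besicovitch property \eqref{e21} to pick a trigonometric polynomial $p_k$ with $\limsup_{t\to\ii}\frac1t\int_0^t|\bt(s)-p_k(s)|\,ds<2^{-k}$. By Lemma \ref{l3}(i) applied to $p=p_k$, the net $\frac1t\int_0^tp_k(s)T_s(h)\,ds$ converges a.u. as $t\to\ii$ to some $\phi_k\in\mc L^0$, so there is $G_k\su\Om$ with $\mu(\Om\sm G_k)\leq 2^{-k}\eta$ and $\big\|\big(\frac1t\int_0^tp_k(s)T_s(h)\,ds-\phi_k\big)\chi_{G_k}\big\|_\ii\to0$ as $t\to\ii$. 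Put $G=\bigcap_{k\ge1}G_k$, so that $\mu(\Om\sm G)\leq\eta$.

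On $G$ write $M_t(\mc T,\bt)(h)=\frac1t\int_0^tp_k(s)T_s(h)\,ds+\frac1t\int_0^t(\bt(s)-p_k(s))T_s(h)\,ds$. Since $h\in\mc L^\ii$ and each $T_s$ contracts $\mc L^\ii$, the second summand has $\mc L^\ii$-norm at most $\|h\|_\ii\cdot\frac1t\int_0^t|\bt(s)-p_k(s)|\,ds$, whose $\limsup$ as $t\to\ii$ is $<2^{-k}\|h\|_\ii$; the first summand, restricted to $G\su G_k$, converges in $\mc L^\ii$ to $\phi_k\chi_G$. Hence $\limsup_{t\to\ii}\big\|M_t(\mc T,\bt)(h)\chi_G-\phi_k\chi_G\big\|_\ii\leq 2^{-k}\|h\|_\ii$ for every $k$, which forces $\big\{M_t(\mc T,\bt)(h)\chi_G\big\}_{t>0}$ to be Cauchy in $\mc L^\ii$ as $t\to\ii$. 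As $\eta>0$ was arbitrary, the net $\{M_t(\mc T,\bt)(h)\}_{t>0}$ is a.u.-Cauchy, hence by Proposition \ref{p0} converges a.u. Thus $\{M_t(\mc T,\bt)\}_{t>0}$ converges a.u. on $\mc L^1\cap\mc L^\ii$, which is dense in $\mc L^p$; Corollary \ref{c1} then yields a.u. convergence of $\{M_t(\mc T,\bt)(f)\}_{t>0}$ as $t\to\ii$ to some $\wh f\in\mc L^0$ for every $f\in\mc L^p$, and picking $t_n\to\ii$ with $M_{t_n}(\mc T,\bt)(f)\to\wh f$ a.u. and using $\|M_{t_n}(\mc T,\bt)(f)\|_p\leq C\|f\|_p$, Corollary \ref{c3} gives $\wh f\in\mc L^p$.

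The main obstacle is that the ``error weight'' $\bt-p_k$ is \emph{not} small in $\|\cdot\|_\ii$ (only its Cesàro averages eventually drop below $2^{-k}$), so the maximal inequality of Theorem \ref{t2} cannot control $M(\mc T,\bt-p_k)^*(h)$ directly. Restricting to the algebra $\mc L^1\cap\mc L^\ii$ is what circumvents this: there the remainder $\frac1t\int_0^t(\bt-p_k)(s)T_s(h)\,ds$ is controlled in $\mc L^\ii$-norm by $\|h\|_\ii$ times the Cesàro average of $|\bt-p_k|$, and a diagonal choice of polynomials $p_k$ along $2^{-k}\to0$ upgrades this ``small-remainder'' estimate to a genuine a.u.-Cauchy property.
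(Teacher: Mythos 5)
Your proposal is correct and follows essentially the same route as the paper: approximate $\bt$ by trigonometric polynomials, invoke Lemma \ref{l3}(i) for the polynomial-weighted averages, control the remainder in $\|\cdot\|_\ii$ on $\mc L^1\cap\mc L^\ii$ by the Ces\`aro average of $|\bt-p_k|$ times $\|h\|_\ii$ to get the a.u.-Cauchy property, then pass to all of $\mc L^p$ via Corollary \ref{c1} and identify $\wh f\in\mc L^p$ via Corollary \ref{c3}. Your write-up merely makes explicit (with the sets $G_k$ and the diagonal choice $2^{-k}$) the Cauchy estimate that the paper states more tersely.
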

\begin{proof}
Assume that $f\in\mc L^1\cap \mc L^\ii$. Fix $\ep>0$ and choose a trigonometric polynomial $p=p_\ep$ to satisfy condition (\ref{e21}). Let $\{P_t(x)\}_{t>0}$ be given by (\ref{e7}). Then we have
\begin{equation}\label{e8}
\big\| M_t(\mc T,\bt)(f)-P_t(\mc T)(f)\big\|_\ii\leq\frac 1t\int_0^t\big|\bt(s)-p(s)\big|ds\, \|f\|_\ii<\ep\,\|f\|_\ii
\end{equation}
for all big enough values of $t$. Since, by Lemma \ref{l3}, the averages $P_t(\mc T)(f)$ converge a.u.,
it follows that the net $\{M_t(\mc T,\bt)(f)\}_{t>0}$ is a.u. Cauchy as $t\to\ii$. Therefore, by Proposition \ref{p0}, this net converges to some $\wh f\in\mc L^0$, and as in the end of proof of Theorem \ref{t3}, it follows that $\wh f\in\mc L^p$.
\end{proof}

\begin{teo}\label{t7}
Let a semigroup $\{T_t\}_{t\ge 0}\su DS$ be strongly continuous in $\mc L^1$, and let $\bt: \mbb R_+\to\mbb C$  be a bounded locally Besicovitch function with $\|\bt\|_\ii<C<\ii$. Then, given $1\leq p<\ii$ and $f\in\mc L^p$, the averages $\{M_t(\mc T,\bt)(f)\}_{t>0}$ converge a.u. to $\lb(f)\,f$ as $t\to0$ for some $\lb(f) \in \mbb C$.
\end{teo}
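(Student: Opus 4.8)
The plan is to mirror the proof of Theorem \ref{t6}, but invoking part (ii) of Lemma \ref{l3} and the local Besicovitch condition (\ref{e21z}) in place of (\ref{e21}), and then to identify the limit. First I would treat $f\in\mc L^1\cap\mc L^\ii$. Fix $\ep>0$, choose a trigonometric polynomial $p=p_\ep$ satisfying (\ref{e21z}), and let $P_t(\mc T)(f)$ be given by (\ref{e7}). Exactly as in (\ref{e8}), but now using the $\limsup_{t\to0^+}$ in (\ref{e21z}), one gets $\big\|M_t(\mc T,\bt)(f)-P_t(\mc T)(f)\big\|_\ii\leq\frac1t\int_0^t|\bt(s)-p(s)|\,ds\,\|f\|_\ii<\ep\,\|f\|_\ii$ for all sufficiently small $t>0$. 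Since $P_t(\mc T)(f)\to p(0)f$ a.u. as $t\to0$ by Lemma \ref{l3}(ii), the decomposition $M_t(\mc T,\bt)(f)-M_{t'}(\mc T,\bt)(f)=(M_t(\mc T,\bt)(f)-P_t(\mc T)(f))+(P_t(\mc T)(f)-P_{t'}(\mc T)(f))+(P_{t'}(\mc T)(f)-M_{t'}(\mc T,\bt)(f))$ shows that $\{M_t(\mc T,\bt)(f)\}_{t>0}$ is a.u. Cauchy as $t\to0$ (the outer two terms being uniformly $\|\cdot\|_\ii$-small, the middle one a.u. convergent to $0$), so by Proposition \ref{p0} it converges a.u. to some $\wh f\in\mc L^0$.

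Next I would identify $\wh f$. Put $\lb_\ep=p_\ep(0)$. Since $p_\ep$ is continuous at $0$, we have $\frac1t\int_0^tp_\ep(s)\,ds\to\lb_\ep$ as $t\to0^+$, so (\ref{e21z}) gives $\limsup_{t\to0^+}\big|\frac1t\int_0^t\bt(s)\,ds-\lb_\ep\big|<\ep$. As $\frac1t\int_0^t\bt(s)\,ds$ stays bounded by $C$, it has a limit point $L$ as $t\to0^+$, whence $|\lb_\ep-L|\leq\ep$ for every $\ep$, so $|\lb_\ep-\lb_{\ep'}|\leq\ep+\ep'$; thus $\lb_\ep$ converges to some $\lb=\lb(\bt)\in\mbb C$ as $\ep\to0$ (and in fact $\frac1t\int_0^t\bt(s)\,ds\to\lb$). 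Because $P_t(\mc T)(f)\to\lb_\ep f$ a.u. while $\big\|M_t(\mc T,\bt)(f)-P_t(\mc T)(f)\big\|_\ii<\ep\|f\|_\ii$ for small $t$, the a.u. limit satisfies $|\wh f-\lb_\ep f|\leq\ep\|f\|_\ii$ a.e. for each $\ep$; letting $\ep\to0$ yields $\wh f=\lb f$. Hence $M_t(\mc T,\bt)(f)\to\lb f$ a.u. as $t\to0$ for every $f\in\mc L^1\cap\mc L^\ii$.

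It remains to pass to $\mc L^p$. By Corollary \ref{c1} the a.u.-convergence set of $\{M_t(\mc T,\bt)\}_{t>0}$ as $t\to0$ is closed in $\mc L^p$, and since it contains the dense subset $\mc L^1\cap\mc L^\ii$ it is all of $\mc L^p$. Fix $f\in\mc L^p$, so $M_t(\mc T,\bt)(f)\to\wh f$ a.u. for some $\wh f\in\mc L^0$, and choose $t_n\to0$ with $M_{t_n}(\mc T,\bt)(f)\to\wh f$ a.u.; since $\|M_t(\mc T,\bt)(g)\|_p\leq\frac1t\int_0^t|\bt(s)|\,\|T_s(g)\|_p\,ds\leq C\|g\|_p$, we have $\|M_{t_n}(\mc T,\bt)(f)\|_p\leq C\|f\|_p$, so Corollary \ref{c3} gives $\wh f\in\mc L^p$. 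Now take $f_k\in\mc L^1\cap\mc L^\ii$ with $\|f-f_k\|_p\to0$. Then $M_{t_n}(\mc T,\bt)(f_k)\to\lb f_k$ a.u., hence in measure, and $M_{t_n}(\mc T,\bt)(f)\to\wh f$ in measure, so $M_{t_n}(\mc T,\bt)(f)-M_{t_n}(\mc T,\bt)(f_k)\to\wh f-\lb f_k$ in measure, with $\sup_n\|M_{t_n}(\mc T,\bt)(f)-M_{t_n}(\mc T,\bt)(f_k)\|_p\leq C\|f-f_k\|_p$; Corollary \ref{c3} then gives $\|\wh f-\lb f_k\|_p\leq C\|f-f_k\|_p$, and letting $k\to\ii$ yields $\wh f=\lb f$. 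Setting $\lb(f)=\lb$ completes the proof.

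The main obstacle I expect is not the existence of the a.u. limit, which follows the template of Theorem \ref{t6} almost verbatim, but the identification of the limit: one must first extract a well-defined scalar $\lb$ from the family $\{p_\ep(0)\}$ using the Cesàro-approximation property (\ref{e21z}) together with the continuity of trigonometric polynomials at $0$, and then transfer the identification $M_t(\mc T,\bt)(f)=\lb f+o(1)$ from the dense algebra $\mc L^1\cap\mc L^\ii$ to all of $\mc L^p$, where $\|\cdot\|_\ii$-estimates are unavailable and one must rely on the uniform $\mc L^p$-boundedness of the averages and the closedness of the unit ball under convergence in measure (Corollary \ref{c3}).
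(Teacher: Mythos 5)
Your proposal is correct. The first half\,---\,a.u.\ convergence of $\{M_t(\mc T,\bt)(f)\}_{t>0}$ as $t\to0$, obtained first for $f\in\mc L^1\cap\mc L^\ii$ from the sup-norm estimate (\ref{e8}) with (\ref{e21z}) in place of (\ref{e21}) and Lemma \ref{l3}(ii), then for all $f\in\mc L^p$ via Corollary \ref{c1}\,---\,coincides with the paper's argument. Where you genuinely diverge is in the identification of the limit. The paper picks trigonometric polynomials $p_n$ satisfying (\ref{e21z}) with $\ep=1/n$ and notes that the analogue of (\ref{e8}) in the $\|\cdot\|_p$-norm, namely $\big\|M_t(\mc T,\bt)(f)-P^{(n)}_t(\mc T)(f)\big\|_p\leq\frac1t\int_0^t|\bt-p_n|\,ds\,\|f\|_p<\|f\|_p/n$ for small $t$, holds for \emph{every} $f\in\mc L^p$ at once because $\|T_s(f)\|_p\leq\|f\|_p$; combining this with $P^{(n)}_t(\mc T)(f)\to p_n(0)f$ a.u.\ and the a.u.-closedness of the $\mc L^p$-ball (Corollary \ref{c3}) yields $\|\wh f-p_n(0)f\|_p\leq\|f\|_p/n$ directly, so $\wh f=\lim_n p_n(0)f$ in $\mc L^p$ with no need to route the identification through the dense subspace. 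You instead extract the scalar from the weight itself, proving that $\lb=\lim_{t\to0^+}\frac1t\int_0^t\bt(s)\,ds$ exists, identify $\wh f=\lb f$ a.e.\ on $\mc L^1\cap\mc L^\ii$ by pointwise sup-norm bounds, and then transfer to all of $\mc L^p$ by approximation together with Corollary \ref{c3} applied to convergence in measure. Your route is longer, but it buys something the paper leaves implicit: an explicit formula for the constant, showing that $\lb(f)$ is the Ces\`aro limit of $\bt$ at $0^+$ and in particular is independent of $f$.
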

\begin{proof}
Assume that $f\in\mc L^1\cap\mc L^\ii$. Fix $\ep>0$ and choose a trigonometric polynomial $p=p_\ep$ to satisfy condition (\ref{e21z}). If the net $\{P_t(\mc T)(f)\}_{t>0}$ is given by (\ref{e7}), then (\ref{e8}) holds for all small enough positive values of $t$. Since, by Lemma \ref{l3}, the averages $P_t(\mc T)(f)$ converge a.u. as $t\to0$, it follows that the net $\{M_t(\mc T,\bt)(f)\}_{t>0}$ is a.u. Cauchy as $t\to0$. Then, as in Theorem \ref{t6}, we conclude that for any $f \in\mc L^p$ the averages $\{M_t(\mc T,\bt)(f)\}_{t>0}$ converge a.u. to some $\wh f\in \mc L^p$ as $t\to0$.

Let $p_n$ be a trigonometric polynomial to satisfy condition (\ref{e21z}) with $\ep=1/n$, and let $\{P^{(n)}_t(\mc T)(f)\}_{t>0}$ be the corresponding averages given by (\ref{e7}). Then there exists $t_n>0$ such that
\begin{equation}\label{e4}
\big\| M_t(\mc T,\bt)(f)-P^{(n)}_t(\mc T)(f)\big\|_p\leq\frac 1t\int_0^t\big|\bt(s)-p_n(s)\big|ds\, \|f\|_p<\frac{\|f\|_p}n
\end{equation}
holds for all  $0<t<t_n$.

By Lemma \ref{l3}, $P^{(n)}_t(\mc T)(f)\to p_n(0)\,f$ a.u. as $t\to0$, implying that there exists a sequence $t_m\to0$ such that
\[
M_{t_m}(\mc T,\bt)(f) -P^{(n)}_{t_m}(\mc T)(f)\to\wh f-p_n(0)\,f \text{\ \ a.u. as \ } m\to\ii.
\]
Since, by Corollary \ref{c3}, the unit ball in $\mc L^p$ is sequentially closed with respect to a.u. convergence, (\ref{e4}) entails that
\[
\big\|\wh f - p_n(0)\,f\big\|_p\leq\frac{\|f\|_p}n,
\]
hence $\wh f=\|\cdot\|_p-\lim\limits_{n\to\ii}p_n(0)\,f$, and we conclude that $\wh f=\lb(f)\,f$ for some $\lb(f)\in\mbb C$, which completes the argument.
\end{proof}

\section{Extension to fully symmetric spaces}

Now we aim to extend the results of Section 5 to the fully symmetric spaces $E\su\mc L^1+\mc L^\ii$ with
$\mb 1=\chi_\Om\notin E$.

As before, a semigroup $\mc T=\{T_s\}_{s\ge0}\su DS$ is assumed to be strongly continuous in $\mc L^1$. Then, by Proposition \ref{p1}, if $\bt: \mbb R_+\to\mbb C$ is a Lebesgue measurable function such that $\|\bt\|_\ii<C<\ii$, the averages $M_t(\mc T,\bt)(f)$ are defined for every $f\in\mc L^1$ and $t>0$. Furthemore, as the operators $C^{-1}\,M_t(\mc T,\bt)$ contract both $\mc L^1$ and $\mc L^1\cap\mc L^\ii$, we may assume,  in view of Theorem \ref{t0}, that the averages $M_t(\mc T,\bt)(f)$ are defined for all $f\in\mc L^1+\mc L^\ii$. Note that then $C^{-1}M_t(\mc T,\bt)\in DS$ for all $t>0$.

\vskip5pt
Denote
\[
\mc R_\mu=\left\{f \in\mc L^1+\mc L^\ii: \ \mu\{|f|> \lb\}<\ii \text{ \ for all \ } \lb>0\right\},
\]
the Fava's space -- see \cite[Section (2.5.5)]{es}. (One can observe that $\bigcup\limits_{1\leq p<\ii}\mc L^p \subsetneqq\mc R_\mu$.) 

As it was shown -- see \cite{cl18,cl19,ku} -- that the pointwise Dunford-Schwartz ergodic theorem holds for $f\in\mc L^1+\mc L^\ii$ if and only if $f\in\mc R_\mu$, we will only be interested in the functions that belong to the Fava's space $\mc R_\mu$.

\vskip5pt
We will utilize the following characterization of $\mc R_\mu$ -- see \cite[Proposition 2.1]{ccl}:

\begin{pro}\label{p5}
Let $f\in\mc L^1+\mc L^\ii$. Then $f\in\mc R_\mu$ if and only if for each $\ep>0$ there exist $g_\ep\in \mc L^1$  and $h_\ep\in\mc L^\ii$ such that
\[
f=g_\ep+h_\ep\text{ \ \  and \ \ }\| h_\ep\|_\ii\leq\ep.
\]
\end{pro}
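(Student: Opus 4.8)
The plan is to prove the "only if" and "if" directions separately, both via a decomposition of $f$ according to the level sets of $|f|$. For the "only if" direction, suppose $f\in\mc R_\mu$ and fix $\ep>0$. Consider the truncation at level $\ep$: set $g_\ep=(|f|-\ep)\,\operatorname{sgn}(f)\cdot\chi_{\{|f|>\ep\}}$ (equivalently, $g_\ep = f - h_\ep$ where $h_\ep$ is $f$ truncated in modulus to $\ep$), so that $\|h_\ep\|_\ii\leq\ep$ automatically. It remains to check $g_\ep\in\mc L^1$. Since $f\in\mc L^1+\mc L^\ii$, write $f=f_1+f_2$ with $f_1\in\mc L^1$, $f_2\in\mc L^\ii$; on the set $A_\ep=\{|f|>\ep\}$ one has $\mu(A_\ep)<\ii$ by the defining property of $\mc R_\mu$, and $|g_\ep|\leq|f|\chi_{A_\ep}\leq |f_1|\chi_{A_\ep}+\|f_2\|_\ii\chi_{A_\ep}$, which is integrable because $|f_1|\in\mc L^1$ and $\mu(A_\ep)<\ii$. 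Hence $g_\ep\in\mc L^1$ and the decomposition is established.

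For the "if" direction, suppose that for each $\ep>0$ we have $f=g_\ep+h_\ep$ with $g_\ep\in\mc L^1$ and $\|h_\ep\|_\ii\leq\ep$. Fix $\lb>0$ and pick $\ep=\lb/2$. On the set $\{|f|>\lb\}$ we have $|g_\ep|\geq |f|-|h_\ep|>\lb-\lb/2=\lb/2$, so $\{|f|>\lb\}\su\{|g_\ep|>\lb/2\}$. Since $g_\ep\in\mc L^1$, Chebyshev's inequality gives $\mu\{|g_\ep|>\lb/2\}\leq 2\|g_\ep\|_1/\lb<\ii$, whence $\mu\{|f|>\lb\}<\ii$. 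As $\lb>0$ was arbitrary and $f\in\mc L^1+\mc L^\ii$ by hypothesis, we conclude $f\in\mc R_\mu$.

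I do not anticipate a genuine obstacle here; the statement is essentially a reformulation. The only point requiring a little care is the "only if" direction, where one must invoke both the $\mc L^1+\mc L^\ii$ membership (to control $f$ off the high level set) and the finiteness $\mu\{|f|>\ep\}<\ii$ (to handle the bounded part of $f$ on the level set) simultaneously — neither alone suffices to put the truncation $g_\ep$ in $\mc L^1$. One should also note that the choice of truncation making $\|h_\ep\|_\ii\leq\ep$ exactly (rather than $<\ep$) is harmless, and that replacing $\ep$ by $\ep/2$ throughout, if a strict inequality is preferred, changes nothing.
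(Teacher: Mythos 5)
Your proof is correct. Note that the paper itself gives no proof of this proposition\,---\,it is quoted from \cite[Proposition 2.1]{ccl}\,---\,and your argument (truncate $f$ in modulus at level $\ep$ to get $h_\ep$, use $\mu\{|f|>\ep\}<\ii$ together with a decomposition $f=f_1+f_2\in\mc L^1+\mc L^\ii$ to show the remainder is integrable, and Chebyshev for the converse) is precisely the standard one for this characterization of Fava's space; your side remark that both hypotheses are genuinely needed in the ``only if'' direction is also accurate.
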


\begin{cor}\label{c2}
If $T\in DS$, then $T(\mc R_\mu)\su\mc R_\mu$, hence $M_t(\mc T,\bt)(\mc R_\mu)\su\mc R_\mu$ for every Lebesgue measurable function $\bt: \mbb R_+\to\mbb C$ such that $\|\bt\|_\ii<\ii$ and $t>0$.
\end{cor}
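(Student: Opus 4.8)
The plan is to reduce everything to Proposition \ref{p5}, which characterizes $\mc R_\mu$ as the set of $f\in\mc L^1+\mc L^\ii$ that admit, for every $\ep>0$, a decomposition $f=g_\ep+h_\ep$ with $g_\ep\in\mc L^1$ and $\|h_\ep\|_\ii\le\ep$. First I would take $f\in\mc R_\mu$ and $T\in DS$ (in the extended sense of Theorem \ref{t0}, so that $T$ is defined on all of $\mc L^1+\mc L^\ii$ and contracts $\mc L^\ii$), fix $\ep>0$, and write $f=g_\ep+h_\ep$ as above. Then by linearity $T(f)=T(g_\ep)+T(h_\ep)$, where $T(g_\ep)\in\mc L^1$ since $T$ contracts $\mc L^1$, and $\|T(h_\ep)\|_\ii\le\|h_\ep\|_\ii\le\ep$ since $T$ contracts $\mc L^\ii$. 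Thus $T(f)$ admits the required decomposition for every $\ep>0$, and Proposition \ref{p5} gives $T(f)\in\mc R_\mu$. This proves $T(\mc R_\mu)\su\mc R_\mu$.

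For the second assertion, recall from the discussion preceding the corollary that $C^{-1}M_t(\mc T,\bt)\in DS$ for every $t>0$ whenever $\|\bt\|_\ii<C<\ii$. Applying the first part of the corollary to the operator $C^{-1}M_t(\mc T,\bt)\in DS$ gives $C^{-1}M_t(\mc T,\bt)(\mc R_\mu)\su\mc R_\mu$; since $\mc R_\mu$ is a linear space, multiplying by the scalar $C$ yields $M_t(\mc T,\bt)(\mc R_\mu)\su\mc R_\mu$, as claimed.

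There is essentially no obstacle here: the only point requiring a little care is that the statement implicitly uses the extended Dunford-Schwartz operator of Theorem \ref{t0} (so that $T$ genuinely contracts the whole of $\mc L^\ii$, not just $\mc L^1\cap\mc L^\ii$), which is exactly what is needed to control $\|T(h_\ep)\|_\ii$. One should also note that the decomposition $f=g_\ep+h_\ep$ furnished by Proposition \ref{p5} need not be an $\mc L^1$-decomposition of $h_\ep$, but that is irrelevant since we only need $h_\ep\in\mc L^\ii$ with small norm, and $T$ acts on $\mc L^\ii$ by Theorem \ref{t0}.
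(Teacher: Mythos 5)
Your proposal is correct and is exactly the argument the paper intends: the corollary is stated without proof immediately after Proposition \ref{p5}, and the evident justification is that the extended operator of Theorem \ref{t0} contracts both $\mc L^1$ and $\mc L^\ii$, hence preserves the $\ep$-decompositions characterizing $\mc R_\mu$, with the weighted averages handled via $C^{-1}M_t(\mc T,\bt)\in DS$ and the linearity of $\mc R_\mu$. No gaps.
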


Let us also note the following -- see \cite[Theorem 2.3]{li1}, where the proof can be straightforwardly adapted to the current commutative setting:

\begin{pro}\label{p6}
The space $\mc R_\mu$ is complete with respect to a.u. convergence.
\end{pro}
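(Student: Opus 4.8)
The plan is to mimic the proof of Proposition \ref{p0} (completeness of $\mc L^0$ under a.u. convergence), adding the observation that an a.u.-Cauchy net with terms in $\mc R_\mu$ has its limit in $\mc R_\mu$. Concretely, suppose $\{f_\al\}\su\mc R_\mu$ is a.u.\ Cauchy. Since $\mc R_\mu\su\mc L^0$ and, by Proposition \ref{p0}, $\mc L^0$ is complete with respect to a.u.\ convergence, there exists $f\in\mc L^0$ with $f_\al\to f$ a.u.; it remains only to show $f\in\mc R_\mu$. For this I would use the characterization in Proposition \ref{p5}: it suffices to produce, for each $\ep>0$, a decomposition $f=g+h$ with $g\in\mc L^1$ and $\|h\|_\ii\leq\ep$.

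Fix $\ep>0$. First extract from the net an index $\al_0$ and a measurable set $G\su\Om$ with $\mu(\Om\sm G)<\ii$ (this is automatic, since $\mu(\Om\sm G)$ can be made $\le 1$) such that $\|(f-f_{\al_0})\chi_G\|_\ii\leq\ep/2$, using a.u.\ convergence. Since $f_{\al_0}\in\mc R_\mu$, apply Proposition \ref{p5} to get $f_{\al_0}=g_0+h_0$ with $g_0\in\mc L^1$ and $\|h_0\|_\ii\leq\ep/2$. Now write
\[
f=\underbrace{\big(f-f_{\al_0}\big)\chi_{\Om\sm G}+g_0\chi_{\Om\sm G}+h_0\chi_{\Om\sm G}}_{\text{``large'' part, to be absorbed into }g}\ +\ \underbrace{\big(f-f_{\al_0}\big)\chi_G+h_0\chi_G}_{h}\ +\ g_0\chi_G,
\]
and regroup as $f=g+h$ where $h=(f-f_{\al_0})\chi_G+h_0\chi_G$ and $g=f-h$. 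Then $\|h\|_\ii\leq\|(f-f_{\al_0})\chi_G\|_\ii+\|h_0\|_\ii\leq\ep$. To see $g\in\mc L^1$: on $G$ we have $g=f_{\al_0}\chi_G-h_0\chi_G=g_0\chi_G\in\mc L^1$, while on $\Om\sm G$ we have $g=f\chi_{\Om\sm G}$, and since $\|f\chi_{\Om\sm G}\|_\ii\leq\|(f-f_{\al_0})\chi_{\Om\sm G}\|_\ii+\|f_{\al_0}\chi_{\Om\sm G}\|_\ii$ — wait, this last bound need not be finite. The cleaner route is instead: on $\Om\sm G$, write $g=f\chi_{\Om\sm G}=(f-f_{\al_0})\chi_{\Om\sm G}+g_0\chi_{\Om\sm G}+h_0\chi_{\Om\sm G}$, absorb $h_0\chi_{\Om\sm G}\in\mc L^1$ (it is bounded and supported on the finite-measure set $\Om\sm G$) and $g_0\chi_{\Om\sm G}\in\mc L^1$, so it remains to check $(f-f_{\al_0})\chi_{\Om\sm G}\in\mc L^1$. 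This is where I must be careful.

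The main obstacle is precisely controlling $f-f_{\al_0}$ on $\Om\sm G$, where the a.u.\ estimate gives nothing. The fix is to choose $G$ and $\al_0$ more carefully, exploiting that $\sigma$-finiteness lets us also arrange $f_{\al_0}-f\in\mc L^1$ off a set of finite measure — but that is not automatic from a.u.\ convergence alone. The correct approach, I believe, is to take a single $\al_0$ such that $f-f_{\al_0}$ is itself small in a suitable sense, namely: by a.u.\ convergence choose $G$ with $\mu(\Om\sm G)\leq\ep'$ (for a second small parameter $\ep'$) and $\|(f-f_{\al_0})\chi_G\|_\ii\leq\ep/2$; then $f_{\al_0}\in\mc R_\mu$ and, decomposing it as above, the function $f\chi_G=g_0\chi_G+(h_0+f-f_{\al_0})\chi_G$ splits as (\,$\mc L^1$\,) $+$ ($\ell^\ii$ of norm $\le\ep$), and for the tail one uses that $\mc R_\mu$ is a linear space and that $(f-f_{\al_0})\chi_{\Om\sm G}$, being a pointwise a.e.\ limit of $(f_\al-f_{\al_0})\chi_{\Om\sm G}$ — each of which lies in $\mc R_\mu$ — and since one can check directly from the definition that $\mc R_\mu$ is closed under the operation ``restrict to a finite-measure set and add a bounded function,'' the decomposition closes. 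I would therefore present the argument as: (1) $\mc R_\mu$ is a vector subspace of $\mc L^0$ closed under multiplication by characteristic functions; (2) for $f$ the a.u.\ limit, $f\chi_G\in\mc R_\mu$ for every $G$ with $\mu(\Om\sm G)<\ii$, via Proposition \ref{p5} applied to some $f_{\al_0}$; (3) $f\chi_{\Om\sm G}\in\mc R_\mu$ because it is bounded on a finite-measure set once $G$ is chosen so that $f$ is bounded on $\Om\sm G$ (possible since $f\in\mc L^0$ is a.e.\ finite, by Egorov's theorem on finite-measure pieces together with exhaustion); (4) add (2) and (3). The genuinely delicate point is step (3) in the infinite-measure setting, and I would handle it by exhausting $\Om$ by increasing finite-measure sets and invoking Proposition \ref{p5} directly rather than Egorov.
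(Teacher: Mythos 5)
Your overall plan is the natural one: by Proposition \ref{p0} the a.u.\ Cauchy net has an a.u.\ limit $f\in\mc L^0$, and the issue is to place $f$ in $\mc R_\mu$. The part of your argument that genuinely works is the distributional half of the definition: choosing $\al_0$ and $G$ with $\mu(\Om\sm G)\leq\ep$ and $\big\|(f-f_{\al_0})\chi_G\big\|_\ii\leq\dt$, one gets
\[
\{|f|>\lb\}\su(\Om\sm G)\cup\big(\{|f|>\lb\}\cap G\big)\su(\Om\sm G)\cup\{|f_{\al_0}|>\lb-\dt\},
\]
a set of finite measure whenever $\dt<\lb$, so $\mu\{|f|>\lb\}<\ii$ for every $\lb>0$. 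What never gets established is the other half of the definition, $f\in\mc L^1+\mc L^\ii$\,---\,precisely the term $(f-f_{\al_0})\chi_{\Om\sm G}$ that you flag. None of your three patches closes it: the first you withdraw yourself; the second is circular, since the claim that $(f-f_{\al_0})\chi_{\Om\sm G}$, being an a.e.\ limit of elements of $\mc R_\mu$, must lie in $\mc R_\mu$ assumes exactly the closedness under limits that is being proved; and step (3) is impossible in general, because requiring $f$ to be bounded by $M$ on $\Om\sm G$ forces $G\supset\{|f|>M\}$, which is exactly where the uniform estimate $\|(f-f_{\al_0})\chi_G\|_\ii\leq\dt$ can fail.

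The gap is not cosmetic. On $\Om=(0,\ii)$ with Lebesgue measure take $f_n=x^{-1}\chi_{(1/n,1)}\in\mc L^1\cap\mc L^\ii\su\mc R_\mu$; then $f_n\to f=x^{-1}\chi_{(0,1)}$ a.u.\ (for $G=[\ep,\ii)$ one has $(f-f_n)\chi_G=0$ once $n>1/\ep$), and $f$ satisfies $\mu\{|f|>\lb\}\leq1$ for all $\lb>0$ yet $f\notin\mc L^1+\mc L^\ii$, since any decomposition $f=g+h$ with $\|h\|_\ii\leq M$ forces $|g|\geq(2x)^{-1}$ on $(0,(2M)^{-1})$, so $g\notin\mc L^1$. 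Thus a.u.\ convergence preserves the condition $\mu\{|f|>\lb\}<\ii$ but not membership in $\mc L^1+\mc L^\ii$, and no regrouping of the decomposition from Proposition \ref{p5} can manufacture the latter from the a.u.\ estimate alone. A correct argument must obtain $f\in\mc L^1+\mc L^\ii$ from additional information about the net: in the application (Theorem \ref{t8}) one has $C^{-1}M_t(\mc T,\bt)(f)\prec\prec f$, which does force the limit into $\mc L^1+\mc L^\ii$ by the Fatou argument of Theorem \ref{t42}. With the definition of $\mc R_\mu$ as printed, the unrestricted statement cannot be reached along the lines you propose.
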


\begin{teo}\label{t8}
If a semigroup $\mc T=\{T_s\}_{s\ge0}\su DS$ is strongly continuous in $\mc L^1$ and $\bt$ is a bounded Besicovitch (locally Besicovitch) function, then for every $f\in\mc R_\mu$ the averages $M_t(\mc T,\bt)(f)$ converge to some $\wh f\in\mc R_\mu$ as $t\to\ii$ (respectively, as $t\to0$).
\end{teo}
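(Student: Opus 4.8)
The plan is to reduce the statement for general $f\in\mc R_\mu$ to the case $f\in\mc L^1$, which was already handled in Theorems \ref{t6} and \ref{t7}. By Proposition \ref{p5}, for every $\ep>0$ we may write $f=g_\ep+h_\ep$ with $g_\ep\in\mc L^1$ and $\|h_\ep\|_\ii\leq\ep$. Since $C^{-1}M_t(\mc T,\bt)\in DS$ for each $t>0$, the operators $M_t(\mc T,\bt)$ contract $\mc L^\ii$ up to the factor $C$, so $\|M_t(\mc T,\bt)(h_\ep)\|_\ii\leq C\ep$ for all $t>0$. Thus the ``tail'' piece $h_\ep$ contributes at most $C\ep$ uniformly in $t$, both to the averages and to any candidate limit.

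First I would treat the case $t\to\ii$. Apply Theorem \ref{t6} to $g_\ep\in\mc L^1\su\mc L^p$ (say $p=1$): the net $\{M_t(\mc T,\bt)(g_\ep)\}_{t>0}$ converges a.u. as $t\to\ii$, hence is a.u. Cauchy. Given $\dt>0$, choose $G\su\Om$ with $\mu(\Om\sm G)\leq\dt$ and $t_0$ such that $\|(M_{t'}(\mc T,\bt)(g_\ep)-M_{t''}(\mc T,\bt)(g_\ep))\chi_G\|_\ii$ is small for $t',t''\ge t_0$. Then by linearity and the uniform bound on the $h_\ep$-piece,
\[
\big\|\big(M_{t'}(\mc T,\bt)(f)-M_{t''}(\mc T,\bt)(f)\big)\chi_G\big\|_\ii\leq\big\|\big(M_{t'}(\mc T,\bt)(g_\ep)-M_{t''}(\mc T,\bt)(g_\ep)\big)\chi_G\big\|_\ii+2C\ep.
\]
Letting $\ep\to0$ along with $\dt\to0$ shows $\{M_t(\mc T,\bt)(f)\}_{t>0}$ is a.u. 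Cauchy as $t\to\ii$; by Proposition \ref{p6} it converges a.u. to some $\wh f\in\mc R_\mu$ (using Corollary \ref{c2} to know the averages stay in $\mc R_\mu$, and the completeness of $\mc R_\mu$ under a.u. convergence). The case $t\to0$ with $\bt$ locally Besicovitch is identical: replace Theorem \ref{t6} by Theorem \ref{t7}, which gives a.u. convergence of $\{M_t(\mc T,\bt)(g_\ep)\}_{t>0}$ as $t\to0$, and run the same Cauchy estimate near $t=0$.

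The main obstacle—really the only delicate point—is making sure the approximation error is controlled in the a.u.\ (sup-norm on $G$) sense rather than merely in measure; this is precisely why Proposition \ref{p5}'s $\mc L^\ii$-splitting is the right tool, since it converts membership in $\mc R_\mu$ into a uniform small-tail bound that the Dunford-Schwartz property of $C^{-1}M_t(\mc T,\bt)$ preserves. One should also note that the limit $\wh f$ does not depend on the choice of $\ep$-decomposition: if $\wh{g_\ep}$ denotes the a.u.\ limit of $M_t(\mc T,\bt)(g_\ep)$ furnished by Theorem \ref{t6}, then $\wh f$ is the a.u.\ limit of $\wh{g_\ep}$ as $\ep\to0$, and the a.u.\ Cauchy estimate above shows this limit exists in $\mc R_\mu$ and is independent of the approximating sequence. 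No new machinery beyond the cited results is needed; the argument is a routine density/completeness wrap-up, with Proposition \ref{p5} and Proposition \ref{p6} doing the structural work.
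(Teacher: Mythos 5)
Your proposal is correct and follows essentially the same route as the paper: decompose $f=g+h$ via Proposition \ref{p5} with $\|h\|_\ii$ small, apply Theorem \ref{t6} (resp.\ Theorem \ref{t7}) to the $\mc L^1$-part, absorb the $h$-part uniformly via the $\mc L^\ii$-contractivity of $C^{-1}M_t(\mc T,\bt)$, and conclude by a.u.\ completeness of $\mc R_\mu$ (Proposition \ref{p6}). The only cosmetic difference is the order in which the tolerances are fixed; the paper simply chooses $\|h\|_\ii\leq\dt/(3C)$ up front rather than letting $\ep\to0$ afterwards.
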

\begin{proof}
Assume that $\bt$ is a bounded Besicovitch function with $\|\bt\|_\ii<C<\ii$. Fix $\ep>0$ and $\dt>0$. By Proposition \ref{p5}, there exist $g\in\mc L^1$ and $h\in\mc L^\ii$ such that
\[
f=g+h\text{\, \ and\, \ } \|h\|_\ii\leq\frac\dt{3C}.
\]
Since $g\in\mc L^1$, Theorem \ref{t6} implies that there exists $G\su\Om$ and $t_0>0$ such that
\[
\mu(\Om\sm G)\leq\ep\text{\, \ and\, \ } \big\|(M_{t^\pr}(\mc T,\bt)(g)-M_{t^{\pr\pr}}(\mc T,\bt)(g))\chi_G\big\|_\ii\leq\frac\dt3\, \ \ \forall\ t^\pr, t^{\pr\pr}\ge t_0.
\]
Then, given $t^\pr, t^{\pr\pr}\ge t_0$, we have
\begin{equation*}
\begin{split}
\big\|(M_{t^\pr}(\mc T,\bt)(f)&-M_{t^{\pr\pr}}(\mc T,\bt)(f))\chi_G\big\|_\ii\leq\big\|(M_{t^\pr}(\mc T,\bt)(g)-M_{t^{\pr\pr}}(\mc T,\bt)(g))\chi_G\big\|_\ii\\
&+\big\|(M_{t^\pr}(\mc T,\bt)(h)-M_{t^{\pr\pr}}(\mc T,\bt)(h))\chi_G\big\|_\ii\\
&\leq\frac\dt3+\big\|M_{t^\pr}(\mc T,\bt)(h)\big\|_\ii+\big\|M_{t^{\pr\pr}}(\mc T,\bt)(h)\big\|_\ii\leq\frac\dt3+2C\,\|h\|_\ii\leq\dt.
\end{split}
\end{equation*}
Therefore, the net $\{M_t(\mc T,\bt)(f)\}_{t>0}\su\mc R_\mu$ is a.u. Cauchy, implying, by Proposition \ref{p6}, that it converges a.u. to some $\wh f\in\mc R_\mu$.

Next, let us assume that $\bt$ is a bounded locally Besicovitch function. Then, repeating the above argument with Theorem \ref{t7} instead of Theorem \ref{t6}, we conclude that the net $\{M_t(\mc T,\bt)(f)\}_{t>0}$ converges a.u. to some $\wh f\in\mc R_\mu$ as $t\to0$.
\end{proof}

Define
\[
\mc L_\mu^0=\big\{ f\in\mc L^0:\, \mu\{|f|>\lb\}<\ii\text{ \ for some \ }\lb>0\big\}.
\]
Given $f\in\mc L_\mu^0$, the {\it non-increasing rearrangement} of $f$ is defined as
\[
f^*(t)=\inf\big\{ \lb>0:\, \mu\{|f|>\lb\}<t\big\}, \ \, t>0.
\]
A non-zero linear subspace $E\su\mc L_\mu^0$ with a Banach norm $\|\cdot\|_E$ is called {\it symmetric} ({\it fully symmetric}) if $g\in E$, $f\in\mc L_\mu^0$, and
\[
f^*(t)\leq g^*(t)  \ \big(\text{respectively,\ } \int_0^sf^*(t)dt\leq\int_0^sg^*(t)dt\text{\ (in short,\ } f\prec\prec g) \big)\, \ \forall\ s>0
\]
imply that $f\in E$ and $\|f\|_E\leq\|g\|_E$.

\vskip5pt
Immediate examples of fully symmetric spaces are the spaces $\mc L^p=\mc L^p(\Om.\mu)$, $1\leq p\leq\ii$, with the standard norms $\|\cdot\|_p$. Further examples include the well-known Orlicz, Lorentz, and Marcinkiewicz spaces -- for details, see, for example, \cite[Sec.\,5]{cl19},  \cite[Sec.\,6]{ccl}.

\vskip5pt
Here is a criterion for a symmetric space to belong to the space $\mc R_\mu$ -- see \cite[Proposition 6.1]{ccl}:

\begin{pro}\label{p7}
Assume that $\mu(\Om)=\ii$. Then a symmetric space $E$ is contained in $\mc R_\mu$ if and only if $\mb 1\notin E$.
\end{pro}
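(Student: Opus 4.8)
The plan is to prove Proposition \ref{p7} in two directions, relying on the characterization of $\mc R_\mu$ given in Proposition \ref{p5}. For the ``only if'' direction, suppose $E\su\mc R_\mu$ and assume, for contradiction, that $\mb 1\in E$. Since $\mb 1^*(t)=1$ for all $t>0$ and $\mb 1\in\mc R_\mu$, we would need $\mu\{|\mb 1|>\lb\}<\ii$ for all $\lb>0$; but for $0<\lb<1$ this set is all of $\Om$, which has infinite measure by hypothesis. Hence $\mb 1\notin\mc R_\mu$, contradicting $E\su\mc R_\mu$. So $\mb 1\notin E$.

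For the ``if'' direction, assume $\mb 1\notin E$ and take an arbitrary $f\in E$; I must show $f\in\mc R_\mu$, and by Proposition \ref{p5} it suffices to show that for every $\ep>0$ there is a decomposition $f=g_\ep+h_\ep$ with $g_\ep\in\mc L^1$, $h_\ep\in\mc L^\ii$, and $\|h_\ep\|_\ii\leq\ep$. The natural candidates are the truncations
\[
h_\ep=f\,\chi_{\{|f|\leq\ep\}}\quad\text{and}\quad g_\ep=f\,\chi_{\{|f|>\ep\}},
\]
so that $\|h_\ep\|_\ii\leq\ep$ automatically and $f=g_\ep+h_\ep$; the only thing to verify is $g_\ep\in\mc L^1$. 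Writing $A_\ep=\{|f|>\ep\}$, first note that $\mu(A_\ep)<\ii$: indeed, if $\mu(A_\ep)=\ii$ then $\ep\,\chi_{A_\ep}\leq|f|$ pointwise, so $(\ep\chi_{A_\ep})^*(t)\leq f^*(t)$ for all $t>0$, forcing $\ep\chi_{A_\ep}\in E$ (by the symmetric space property) and hence $\chi_{A_\ep}\in E$; but $\chi_{A_\ep}^*(t)=1$ for all $t>0$ (as $\mu(A_\ep)=\ii$), which is the rearrangement of $\mb 1$, so that $\mb 1\in E$ by the symmetric property applied to $\mb 1$ and $\chi_{A_\ep}$, contradicting our hypothesis. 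Thus $\mu(A_\ep)<\ii$.

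It then remains to see that $g_\ep=f\chi_{A_\ep}$ is actually integrable, not merely supported on a finite-measure set. Here I would use that $f\in E\su\mc L^1+\mc L^\ii$: write $f=u+v$ with $u\in\mc L^1$, $v\in\mc L^\ii$, and on $A_\ep$ estimate $|g_\ep|=|f|\chi_{A_\ep}\leq|u|\chi_{A_\ep}+|v|\chi_{A_\ep}\leq|u|+\|v\|_\ii\chi_{A_\ep}$, whence $\int_\Om|g_\ep|\,d\mu\leq\|u\|_1+\|v\|_\ii\,\mu(A_\ep)<\ii$. Therefore $g_\ep\in\mc L^1$, and Proposition \ref{p5} yields $f\in\mc R_\mu$; since $f\in E$ was arbitrary, $E\su\mc R_\mu$. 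The main obstacle is the finiteness of $\mu(A_\ep)$: everything hinges on converting the hypothesis $\mb 1\notin E$ into a constraint on the measure of the level sets of members of $E$, which is precisely where the symmetric space axiom (invariance under domination of non-increasing rearrangements) is used, via the observation that an indicator of an infinite-measure set has the same rearrangement as $\mb 1$.
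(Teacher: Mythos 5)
The paper does not prove Proposition \ref{p7}; it cites \cite[Proposition 6.1]{ccl}, so there is no internal proof to compare against. Your argument is correct and is the standard one: the ``only if'' direction is immediate from $\mu(\Om)=\ii$, and the ``if'' direction correctly converts $\mb 1\notin E$ into finiteness of every level set $\{|f|>\ep\}$ via the observation that $\ep\chi_{A_\ep}\leq|f|$ forces $\chi_{A_\ep}\in E$, while $\chi_{A_\ep}^*\equiv\mb 1^*$ when $\mu(A_\ep)=\ii$ would force $\mb 1\in E$. One small point: you invoke $E\su\mc L^1+\mc L^\ii$ to get the decomposition $f=u+v$; the paper's definition of a symmetric space only requires $E\su\mc L^0_\mu$, so strictly speaking this inclusion is either a standing assumption (as the paper makes explicit at the start of Section 6) or the classical embedding $E\su\mc L^1+\mc L^\ii$ for symmetric Banach function spaces, and it deserves a one-line acknowledgment. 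Note also that once you know $f\in\mc L^1+\mc L^\ii$ and $\mu\{|f|>\lb\}<\ii$ for all $\lb>0$, membership in $\mc R_\mu$ follows directly from its definition, so the detour through Proposition \ref{p5} and the integrability of $g_\ep$ is harmless but unnecessary.
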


Let us now present an application of Theorem \ref{t8} to fully symmetric spaces:

\begin{teo}\label{t42}
Let $E$ be a fully symmetric space such that $\mb 1\notin E$, and let $\mc T$ and $\bt(s)$ be as in Theorem \ref{t8}. Then, given $f\in E$, the averages $M_t(\mc T,\bt)(f)$ converge a.u. to some $\wh f\in E$ as $t\to\ii$ (respectively, as $t\to0$).
\end{teo}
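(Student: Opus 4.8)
The plan is to combine the already-established $\mc R_\mu$-convergence from Theorem~\ref{t8} with a transfer of the maximal inequality to the norm $\|\cdot\|_E$ via the submajorization properties built into the definition of a fully symmetric space. First I would observe that since $\mb 1\notin E$, Proposition~\ref{p7} gives $E\su\mc R_\mu$, so for every $f\in E$ Theorem~\ref{t8} already yields a function $\wh f\in\mc R_\mu$ with $M_t(\mc T,\bt)(f)\to\wh f$ a.u.\ as $t\to\ii$ (respectively $t\to0$). Thus the entire content of the theorem is the single extra claim that $\wh f\in E$ and, implicitly, that convergence takes place in the ambient sense already recorded; no new convergence argument is needed, only an identification of where the limit lives.

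To show $\wh f\in E$, the key step is a maximal-type domination. Since $C^{-1}M_t(\mc T,\bt)\in DS$ for every $t>0$ (noted just before Corollary~\ref{c2}), and the maximal inequality of Theorem~\ref{t2} holds for $\mc L^1$-functions, one gets the Fava-type estimate $\mu\{M(\mc T,\bt)^*(f)>\lb\}\le (4C\|f\|_1/\lb)$ on the dense piece $\mc L^1$, and more relevantly a submajorization bound: for $f=g+h$ with $g\in\mc L^1$, $h\in\mc L^\ii$ as in Proposition~\ref{p5}, one controls $M(\mc T,\bt)^*(f)$ by $M(\mc T,\bt)^*(g)+C\|h\|_\ii\mb 1$. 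The cleaner route, which I would follow, is: take a sequence $t_n\to\ii$ (resp. $t_n\to0$) along which $M_{t_n}(\mc T,\bt)(f)\to\wh f$ a.u., hence a.e. Because each $C^{-1}M_{t_n}(\mc T,\bt)$ is a Dunford-Schwartz operator, it is a contraction for submajorization: $M_{t_n}(\mc T,\bt)(f)\prec\prec C\,f$ for all $n$ (this is the standard fact that $DS$ operators, after scaling, are submajorization contractions, which underlies the fully symmetric setting). Since $E$ is fully symmetric and $C f\in E$ with $\|Cf\|_E=C\|f\|_E$, each $M_{t_n}(\mc T,\bt)(f)$ lies in $E$ with $\|M_{t_n}(\mc T,\bt)(f)\|_E\le C\|f\|_E$.

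It then remains to pass to the limit inside $E$. Here I would invoke the Fatou property of the norm on a fully symmetric space: if $u_n\in E$, $\sup_n\|u_n\|_E\le K<\ii$, and $u_n\to u$ a.e.\ (with $u\in\mc L^0_\mu$), then $u\in E$ and $\|u\|_E\le K$. This is the exact analogue of Proposition~\ref{p9}/Corollary~\ref{c3} for general fully symmetric $E$, and it is a classical fact (the norm of a fully symmetric space has the Fatou property because submajorization is compatible with a.e.\ limits via Fatou's lemma applied to the truncated integrals $\int_0^s u^*$). Applying it to $u_n=M_{t_n}(\mc T,\bt)(f)$, $u=\wh f$, $K=C\|f\|_E$, we conclude $\wh f\in E$ (with the quantitative bound $\|\wh f\|_E\le C\|f\|_E$ as a bonus), which finishes the proof; the a.u.\ convergence statement is inherited verbatim from Theorem~\ref{t8}.

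The main obstacle is making the submajorization step precise in the $\sg$-finite-measure, possibly-infinite setting: one must be sure that $DS$ operators (scaled by $C^{-1}$) do contract $\prec\prec$ on $\mc L^1+\mc L^\ii$, that the Fava space and $E$ interact correctly with $f^*$ being finite, and that the Fatou property is available in precisely the form needed (it is, for fully symmetric spaces, but the cleanest citation is to the Kalton–Sukochev or Dodds–Dodds–de Pagter framework). If one prefers to avoid citing the submajorization contraction property, an alternative is to mimic the proof of Theorem~\ref{t8} directly: split $f=g+h$ with $\|h\|_E$ small (using that $\mb 1\notin E$ forces $h$ with small $E$-norm can be extracted — this needs a density-type lemma in $E$), run the a.u.-Cauchy argument, and then use the $E$-norm Fatou property on the final limit. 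Either way the Fatou property of $\|\cdot\|_E$ is the indispensable ingredient, playing the role that Corollary~\ref{c3} played for $\mc L^p$.
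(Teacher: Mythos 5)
Your proposal follows essentially the same route as the paper: reduce to $E\su\mc R_\mu$ via Proposition \ref{p7}, get the a.u.\ limit $\wh f\in\mc R_\mu$ from Theorem \ref{t8}, observe that $C^{-1}M_{t_n}(\mc T,\bt)\in DS$ forces $C^{-1}M_{t_n}(\mc T,\bt)(f)\prec\prec f$, and then pass the submajorization to the limit to place $\wh f$ in $E$. One correction is needed, though. The ``Fatou property'' you invoke --- $u_n\in E$, $\sup_n\|u_n\|_E\leq K$, $u_n\to u$ a.e.\ imply $u\in E$ with $\|u\|_E\leq K$ --- is \emph{not} a consequence of the paper's definition of a fully symmetric space and is false in general: for $E=(\mc L^1+\mc L^\ii)_0=\{f: f^*(t)\to0\text{ as }t\to\ii\}$ (a fully symmetric space), the functions $u_n=\chi_{\Om_n}$ with $\mu(\Om_n)=n$ are norm-bounded and converge a.e.\ to $\mb 1\notin E$. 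A mere norm bound does not supply a common submajorant, which is exactly what the definition of full symmetry requires you to exhibit. Fortunately your argument never needs the full-strength Fatou property: you have the \emph{fixed} majorant $Cf\in E$ with $M_{t_n}(\mc T,\bt)(f)\prec\prec Cf$ for all $n$, and the statement you should use (and which your parenthetical remark already contains) is that $u_n\prec\prec g$ for all $n$ together with $u_n\to u$ a.e.\ (or in measure) implies $u\prec\prec g$, via $u_n^*\to u^*$ a.e.\ on $(0,s)$ and Fatou's lemma applied to $\int_0^s u_n^*$. That is precisely the computation the paper carries out, citing \cite[Ch.\,II, \S\,3, Sec.\,4]{kps} for the submajorization $C^{-1}M_{t_n}(\mc T,\bt)(f)\prec\prec f$ and \cite[Ch.\,II, \S\,2, $11^0$]{kps} for the convergence of rearrangements. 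With the lemma restated in this fixed-majorant form, your proof is correct and coincides with the paper's.
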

\begin{proof}
Since $\mb 1\notin E$, it follows that $E\su\mc R_\tau$. Therefore, by Theorem \ref{t8}, the averages $\{M_t(\mc T,\bt)(f)\}_{t>0}$ converge a.u. to some $\wh f\in\mc R_\tau$ as $t \to \ii$ (respectively, as $t\to0$),  so, it remains to show that $\wh f\in E$, for which we will follow the argument of that of \cite[Theorem 6.2]{ccl},

Let $\{t_n\}$ be a sequence such that the averages $\{M_{t_n}(\mc T,\bt)(f)\}_{n=1}^\ii$ converge a.u. when $t_n\to\ii$ (respectively, $t_n\to0$). If $C$ is such that $\|\bt\|_\ii<C$, then $C^{-1}M_{t_n}(\mc T,\bt)\in DS$ for every $n$, implying, by \cite[Ch.\,II, \S\,3, Sec.\,4]{kps}, that
\begin{equation}\label{e9}
C^{-1}M_{t_n}(\mc T,\bt)(f)\prec\prec f\, \ \ \forall\ n.
\end{equation}
Next, as $M_{t_n}(\mc T,\bt)(f)\to\wh f$ also in measure, we observe, by \cite[Ch.\,II, \S\,2, $11^0$]{kps}, that 
\[
\big(C^{-1}M_{t_n}(\mc T,\bt)(f)\big)^*\to\big(C^{-1}\wh f\,\big)^*\text{ \ \ a.e. on \ } (0,s)\text{ \ as \ } n\to\ii\ \ \forall \ s>0.
\]
Therefore, it follows from Fatou's lemma and (\ref{e9}) that
\[
\int_0^s\big(C^{-1}\wh f\,\big)^*dt\leq\liminf_n\int_0^s\big(C^{-1}M_{t_n}(\mc T,\bt)(f)\big)^*dt\leq\int_0^sf^*dt\ \ \ \forall \ s>0,
\]
hence $(\wh f\,)^*\prec\prec C\,f^*$. Since $E$ is a fully symmetric space and $f\in E$, we conclude that $\wh f\in E$.
\end{proof}

\begin{rem}
One may observe that all the above arguments remain applicable if the net $\{t\}_{t>0}$ is replaced by an arbitrary net $\{t_\al\}_{\al\in\Lb}\su(0,\ii)$ with the directed set $\Lb$ such that $\displaystyle\lim_{\al\in\Lb}t_\al=\ii$, or $\displaystyle\lim_{\al\in\Lb}t_\al=0$.
\end{rem}

\vskip 5pt
\noindent
{\bf Acknowledgment}. The author would like to express his gratitude to Professor Vladimir Chilin for the invaluable learning experinces he had generously created over many years of our collaboration. This work was motivated by these experiences and partially supported by the Pennsylvania State University traveling program.


\begin{thebibliography}{99}

\bibitem{be} A. S. Besicovitch, {\it Almost Periodic Functions}, Cambridge Univ. Press, 1932.

\bibitem{bo} J. R. Baxter, J. H. Olsen, {\it Weighted and subsequential ergodic theorems}, Canad. J. Math. 35 (1983), 145-166.

\bibitem{bl} A. Bellow, V. Lozert, {\it The weighted pointwise ergodic theorem and the individual ergodic theorem along sequences}, Trans. Amer. Math. Soc. 288\,(1) (1985), 307-345.

\bibitem{cl18} V. Chilin, S. Litvinov, {\it The validity space of Dunford–Schwartz pointwise ergodic theorem}, J. Math. Anal. Appl. 461 (2018), 234–247.

\bibitem{cl19a} V. Chilin, S. Litvinov, {\it Local ergodic theorems in symmetric spaces of measurable operators}, Integr. Equ. Oper. Theory (2019), 91:15, 16 pages.

\bibitem{cl19} V. Chilin, S. Litvinov, {\it Almost uniform and strong convergences in ergodic theorems for symmetric spaces}, Acta Math. Hungar. 157\,(1) (2019), 229-253.

\bibitem{cl20} V. Chilin, S. Litvinov, {\it Noncommutative weighted individual ergodic theorems with continuous time}, Inf. Dimen. Anal. Quant. Prob. Rel. Topics {\bf 23}\,(2) (2020), 20 pages.

\bibitem{ccl} V. Chilin, D. \c C\" omez, and S. Litvinov, {\it Individual ergodic theorems for infinite measure}, Colloq. Math. 167\,(2) (2022), 219-238.

\bibitem{ds} N. Dunford, J. T. Schwartz, {\it Linear Operators, Part I: General Theory}, Wiley, 1988.

\bibitem{dj} C. Demeter, R. L. Jones, {\it Besicovitch weights and the necessity of duality restrictions in the weighted ergodic theorem}, Chapel Hill Ergodic Theory Workshops, Contemp. Math.  356 (2004), Amer. Math. Soc., Providence, RI, 127-135.

\bibitem{es} G. A. Edgar, L. Sucheston, {\it Stopping Times and Directed Processes}, Cambridge University Press, 1992.

\bibitem{ga} A. M. Garsia, {\it Topics in Almost Everywhere Convergence}, Markham, 1970.

\bibitem{kps} S. G. Krein, Ju. I. Petunin, and E. M. Semenov, {\it Interpolation of Linear Operators}, Translations of Mathematical Monographs, Amer. Math. Soc. 54, 1982.

\bibitem{kr} U. Krengel, {\it Ergodic Theorems}, de Gruyter, 1985.

\bibitem{ku} D. Kunszenti-Kovács, {\it Counter-examples to the Dunford–Schwartz pointwise ergodic
theorem on $L^1 + L^\ii$}, Arch. Math. (Basel) 112 (2019), 205–212.

\bibitem{lot} M. Lin, J. Olsen, and A. Tempelman, {\it On modulated ergodic theorems for Dunford-Schwartz operators}, Illinois J. Math. 43\,(3) (1999), 542-567.

\bibitem{li1} S. Litvinov, {\it Notes on noncommutative ergodic theorems}, Proc. Amer. Math. Sci. 150\,(8) (2024), 3381-3391.

\bibitem{mg} S. A. McGrath, {\it Local ergodic theorems for noncommutiong semigroups}, Proc. Amer. Math, Sci. 79\,(2) (1980), 212-216.

\bibitem{mk} F. Mukhamedov and A. Karimov, {\it On noncommutative weighted local ergodic theorems
on $L^p$-spaces}, {\it Period. Math. Hungar.} 55 (2007), 223–235.

\bibitem{or} E. T. Ordman, {\it Classroom Notes: Convergence almost everywhere is not topological}, Amer.
Math. Monthly 73\,(2) (1966), 182–183.

\bibitem{ry} C. Ryll-Nardzewski, {\it Topics in ergodic theory}, Lecture Notes in Math. 472 (1975), Springer-Verlag, 131-156.

\bibitem{sa} R. Sato, {\it Ergodic theorems for $d$-parameter semigroups of Dunford-Schwartz operators}, Math. J. Okayama Univ. 23 (1981), 41-57.

\bibitem{te} T. R. Terrell, {\it The local ergodic theorem and semigroups of non-positive operators},  J. Funct. Anal. 10 (1972), 424-429.

\bibitem{yo} K. Yosida, {\it Functional Analysis}, Springer Verlag, Berlin-G\"ottingen-Heidelberg, 1965.
\end{thebibliography}
\end{document}